\numberwithin{equation}{section}
\theoremstyle{plain}
\newtheorem{theorem}[equation]{Theorem}
\newtheorem{lemma}[equation]{Lemma}
\theoremstyle{definition}
\newtheorem{definition}[equation]{Definition}
\newtheorem{example}[equation]{Example}
\newcommand{\Z}{{\mathbb Z}}
\newcommand{\R}{{\mathbb R}}
\newcommand{\N}{{\mathbb N}}
\newcommand{\Q}{{\mathbb Q}}
\newcommand{\Om}{\Omega}
\providecommand{\vint}[1]{\mathchoice
	{\mathop{\vrule width 5pt height 3 pt depth -2.5pt
			\kern -9pt \kern 1pt\intop}\nolimits_{\kern -5pt{#1}}}
	{\mathop{\vrule width 5pt height 3 pt depth -2.6pt
			\kern -6pt \intop}\nolimits_{\kern -3pt{#1}}}
	{\mathop{\vrule width 5pt height 3 pt depth -2.6pt
			\kern -6pt \intop}\nolimits_{\kern -3pt{#1}}}
	{\mathop{\vrule width 5pt height 3 pt depth -2.6pt
			\kern -6pt \intop}\nolimits_{\kern -3pt{#1}}}}
\newcommand{\eps}{\varepsilon}
\newcommand{\loc}{\mathrm{loc}}
\newcommand{\BV}{\mathrm{BV}}
\newcommand{\ch}{\text{\raise 1.3pt \hbox{$\chi$}\kern-0.2pt}}
\newcommand{\mres}{\mathbin{\vrule height 2ex depth 2.2pt width
		0.12ex\vrule height -0.3ex depth 2.2pt width .5ex}}
\newcommand{\ys}{y}
\DeclareMathOperator{\capa}{Cap}
\DeclareMathOperator{\rcapa}{cap}
\begin{document}
\title[Quasiconformal mappings and 
	the rank of $\tfrac {dDf}{d|Df|}$
	for $f\in \BV(\R^n; \R^n)$]{Quasiconformal mappings and \\
	the rank of $\tfrac {dDf}{d|Df|}$
for $f\in \BV(\R^n; \R^n)$}
\author{Panu Lahti}
\address{Panu Lahti,  Academy of Mathematics and Systems Science, Chinese Academy of Sciences,
	Beijing 100190, PR China, {\tt panulahti@amss.ac.cn}}

\subjclass[2020]{46E35, 31C40, 30C65}
\keywords{Function of bounded variation, quasiconformal mapping, finely open set,
	Alberti's rank one theorem}

\begin{abstract}
We define a relaxed version $H_f^{\textrm{fine}}$ of the distortion number $H_f$ that is used to define quasiconformal
mappings. Then we show that for a BV function $f\in\BV(\R^n;\R^n)$,
for $|Df|$-a.e. $x\in\R^n$ it holds that $H_{f^*}^{\textrm{fine}}(x)<\infty$
if and only if $\tfrac{dDf}{d|Df|}(x)$ has full rank.
\end{abstract}

\date{\today}
\maketitle

\section{Introduction}
	
	Quasiconformal mappings are defined as homeomorphisms $f\in W_{\loc}^{1,n}(\R^n;\R^n)$, $n\ge 2$,
	for which $|\nabla f(x)|^n\le K|\det \nabla f(x)|$ for a.e. $x\in\R^n$ and a constant
	$K<\infty$. In particular, $\nabla f$ has full rank a.e.
	Quasiconformal mappings can also be defined between
	two metric spaces $(X,d_X)$ and $(Y,d_Y)$. For a mapping $f\colon X\to Y$,
	for every $x\in X$ and $r>0$ one defines
	\[
	L_f(x,r):=\sup\{d_Y(f(y),f(x))\colon d_X(y,x)\le r\}
	\]
	and
	\[
	l_f(x,r):=\inf\{d_Y(f(y),f(x))\colon d_X(y,x)\ge r\},
	\]
	and
	\[
	H_f(x,r):=\frac{L_f(x,r)}{l_f(x,r)};
	\]
	we interpret this to be $\infty$ if either the numerator is infinity or the denominator is zero.
	Then one defines
	\[
	H_f(x):=\limsup_{r\to 0} H_f(x,r).
	\]
	A homeomorphism $f\colon X\to Y$ is said to be (metric) \emph{quasiconformal} if there is a number
	$1\le H<\infty$ such that $H_f(x)\le H$ for all $x\in X$.
	In Euclidean spaces and sufficiently regular metric measure spaces,
	this is equivalent with the   ``analytic'' definition mentioned above,
	see  e.g. \cite[Theorem 9.8]{HKST}.
	
	It is natural to ask
	whether the analytic condition of $\nabla f$ having full rank
	is connected with the geometric condition of $H_f$ being finite also for more general mappings
	such as $f\in W_{\loc}^{1,1}(\R^n;\R^n)$, or even functions of bounded variation. 
	The answer is in general negative, since $L_f(x,r)$
	can easily be $\infty$ for every $x\in \R^n$ and $r>0$ already for a mapping $f\in W_{\loc}^{1,n}(\R^n;\R^n)$;
	see Example \ref{ex:W11 function}.
	With this in mind, given $U\subset \R^n$ containing $x\in \R^n$ and $r>0$, we let
	\[
	L_{f,U}(x,r):=\sup\{|f(x)-f(z)|\colon |z-x|\le r,\, z\in U\}
	\]
	and
	\[
	l_{f,U}(x,r):=\inf\{|f(x)-f(z)|\colon |z-x|\ge r,\, z\in U\},
	\]
	and then 
	\[
	H_{f,U}(x,r):=\frac{L_{f,U}(x,r)}{l_{f,U}(x,r)}\quad\textrm{and}\quad
	H_f^{\textrm{fine}}(x):=\inf_{U}\limsup_{r\to 0}H_{f,U}(x,r),
	\]
	where the infimum is taken over \emph{1-finely open sets} $U$ containing $x$.
	We give definitions in Section \ref{sec:prelis}.

Now it turns out that $H_f^{\textrm{fine}}$ and the rank of the Jacobian are connected in the following way.

\begin{theorem}\label{thm:rank}
	Let $f\in \BV(\R^n;\R^n)$, $n\ge 2$. Then for $|Df|$-a.e. $x\in\R^n$, we have
	$H_{f^*}^{\textrm{fine}}(x)<\infty$ if and only if $\tfrac{dDf}{d|Df|}(x)$ has full rank.
\end{theorem}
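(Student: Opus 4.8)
The plan is to split the statement into three cases via Alberti's rank one theorem, and then to exploit the ``fatness'' of $1$-finely open sets together with a fine differentiability property of BV functions. Since $\tfrac{dD^sf}{d|D^sf|}(x)$ has rank one for $|D^sf|$-a.e.\ $x$ and $n\ge 2$, the set where $\tfrac{dDf}{d|Df|}(x)$ has full rank coincides, up to a $|Df|$-null set, with $E:=\{x\colon f\text{ is approximately differentiable at }x,\ \det\nabla f(x)\neq 0\}$, where $\nabla f$ denotes the density of the absolutely continuous part of $Df$. Thus it suffices to prove: (i) $H^{\mathrm{fine}}_{f^*}(x)<\infty$ for $\mathcal L^n$-a.e.\ $x\in E$; (ii) $H^{\mathrm{fine}}_{f^*}(x)=\infty$ for $\mathcal L^n$-a.e.\ $x$ with $0<\operatorname{rank}\nabla f(x)<n$; (iii) $H^{\mathrm{fine}}_{f^*}(x)=\infty$ for $|D^sf|$-a.e.\ $x$. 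I will use two standard facts about the $1$-fine topology (to be taken from Section \ref{sec:prelis}): a $1$-finely open $U\ni x$ has Lebesgue density $1$ at $x$, so if $A$ has positive lower density at $x$ then $U\cap A$ meets every small annulus $B(x,2r)\setminus B(x,r)$; and a set meeting each small dyadic annulus in a piece of an $(n-1)$-rectifiable set of uniformly positive lower $(n-1)$-density (a hyperplane through $x$, or, near $\mathcal H^{n-1}$-a.e.\ of its points, the jump set $S_f$) is \emph{not} $1$-thin at $x$, hence is met by any $1$-finely open $U\ni x$ at all small dyadic scales. Finally, for $\mathcal L^n$-a.e.\ $x$ I will use the \emph{fine differentiability} of $f^*$: for each $\delta>0$ the set $\{z\colon |f^*(z)-f^*(x)-\nabla f(x)(z-x)|>\delta|z-x|\}$ is $1$-thin at $x$.

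\textbf{Case (i).}
Fix $x\in E$ at which fine differentiability holds, set $\sigma:=\sigma_{\min}(\nabla f(x))>0$, fix $0<\delta<\sigma$ and a small $r_0>0$, and put $U:=\fineint\bigl(\{x\}\cup\{z\in B(x,r_0)\colon |f^*(z)-f^*(x)-\nabla f(x)(z-x)|\le\delta|z-x|\}\bigr)$, which by fine differentiability is a $1$-finely open set containing $x$. On $U$ the map $f^*$ differs from the invertible affine map $z\mapsto f^*(x)+\nabla f(x)(z-x)$ by at most $\delta|z-x|$, so for $0<r<r_0/2$ one gets $L_{f^*,U}(x,r)\le(\|\nabla f(x)\|+\delta)r$ (operator norm) and, since $U\subset B(x,r_0)$ meets every small annulus, $l_{f^*,U}(x,r)\ge(\sigma-\delta)r$. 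Hence $H^{\mathrm{fine}}_{f^*}(x)\le\limsup_{r\to0}H_{f^*,U}(x,r)\le(\|\nabla f(x)\|+\delta)/(\sigma-\delta)<\infty$ (indeed, letting $\delta\to0$, at most the linear distortion of $\nabla f(x)$).

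\textbf{Case (ii).}
Let $x$ be as in (ii) with fine differentiability, set $\sigma_1:=\|\nabla f(x)\|>0$, pick unit vectors $e\in\ker\nabla f(x)$ and $w$ with $|\nabla f(x)w|=\sigma_1$, and let $U$ be an arbitrary $1$-finely open set with $x\in U$. For small $\delta,\rho>0$ the set $W:=U\cap\fineint\bigl(\{x\}\cup\{z\colon|f^*(z)-f^*(x)-\nabla f(x)(z-x)|\le\delta|z-x|\}\bigr)$ is again a $1$-finely open neighbourhood of $x$, hence has density $1$; since the cones $C_\rho:=\{z\colon\dist(z-x,\ker\nabla f(x))\le\rho|z-x|\}$ and $C'_\rho:=\{z\colon|z-x-\langle z-x,w\rangle w|\le\rho|z-x|\}$ have positive lower density, $W\cap C_\rho$ and $W\cap C'_\rho$ meet every small annulus. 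A point $z\in W\cap C_\rho$ with $r\le|z-x|\le 2r$ has $|f^*(z)-f^*(x)|\le(\sigma_1\rho+\delta)|z-x|$, so $l_{f^*,U}(x,r)\le 2(\sigma_1\rho+\delta)r$; a point $z'\in W\cap C'_\rho$ with $r/2\le|z'-x|\le r$ has $|f^*(z')-f^*(x)|\ge(\tfrac12-\rho)\sigma_1|z'-x|-\delta|z'-x|\ge\tfrac18\sigma_1 r$ once $\rho,\delta$ are small, so $L_{f^*,U}(x,r)\ge\tfrac18\sigma_1 r$. Hence $\limsup_{r\to0}H_{f^*,U}(x,r)\ge\sigma_1/\bigl(16(\sigma_1\rho+\delta)\bigr)$; letting $\rho,\delta\to0$ gives $\limsup_{r\to0}H_{f^*,U}(x,r)=\infty$, and as $U$ was arbitrary, $H^{\mathrm{fine}}_{f^*}(x)=\infty$.

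\textbf{Case (iii).}
By Alberti's theorem fix $x\in\supp|D^sf|$ with $\tfrac{dD^sf}{d|D^sf|}(x)=\xi\otimes\nu$ for unit vectors $\xi,\nu$, and such that, along some $r_k\to0$, the rescalings $z\mapsto f^*(x+r_kz)$ converge to a one-dimensional profile $z\mapsto f^*(x)+\psi(\langle z,\nu\rangle)\xi$ (with $\psi$ monotone, continuous at $0$ when $x$ is in the support of the Cantor part, and two-valued when $x\in S_f$). Let $U$ be any $1$-finely open set with $x\in U$, and let $P:=\{z\colon\langle z-x,\nu\rangle=0\}$. On one hand, since $f$ genuinely varies in the direction $\xi$ across $P$, points of $U$ in the positive-density cone $\{|\langle z-x,\nu\rangle|\ge\rho|z-x|\}$ at scale $\sim r_k$ keep $L_{f^*,U}(x,r_k)$ bounded below by a fixed $c_0>0$. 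On the other hand, for each $\eps>0$ the set $\{z\colon|f^*(z)-f^*(x)|<\eps\}$ contains, arbitrarily near $x$, a piece of $P$ (Cantor case: using the blow-up together with the $\mathrm{Cap}_1$-quasicontinuity and fine continuity of $f^*$) or a piece of $S_f$ of uniformly positive lower $(n-1)$-density (jump case: using approximate continuity of the one-sided traces $f^\pm$ on $S_f$); such a set is not $1$-thin at $x$, so $U$ meets it at small dyadic scales. Hence there are $z_k\in U$, $z_k\to x$, with $|f^*(z_k)-f^*(x)|\to0$, so $l_{f^*,U}(x,|z_k-x|)\to0$ while $L_{f^*,U}(x,|z_k-x|)\ge c_0$; thus $H_{f^*,U}(x,|z_k-x|)\to\infty$ and $H^{\mathrm{fine}}_{f^*}(x)=\infty$.

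\textbf{Expected difficulties.}
Two points carry the weight. First, the a.e.\ fine differentiability of BV functions: for $p=1$ this is considerably more delicate than the classical $p>1$ theory, and I would import it (along with the density and thinness facts above) from the preliminaries. Second, and the main obstacle, is Case (iii): one must control the values of $f^*$ along the critical hyperplane $P$ (Cantor case) or along $S_f$ (jump case) \emph{inside the arbitrary fine neighbourhood $U$}, which forces one to combine the blow-up and trace theory of BV functions with capacitary lower bounds (pieces of rectifiable hypersurfaces of positive lower density are not $1$-thin; $f^*$ is $\mathrm{Cap}_1$-quasicontinuous). This is where I expect the argument to require genuine work rather than routine estimates.
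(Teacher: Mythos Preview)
Your treatment of the absolutely continuous part (Cases (i) and (ii)) and of the jump part of Case (iii) is essentially the paper's argument, relying on the same fine differentiability theorem and the same capacitary non-thinness of hyperplanes / rectifiable pieces of $J_f$.

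The genuine gap is the Cantor part of Case (iii). Your claim that ``points of $U$ in the positive-density cone $\{|\langle z-x,\nu\rangle|\ge\rho|z-x|\}$ at scale $\sim r_k$ keep $L_{f^*,U}(x,r_k)$ bounded below by a fixed $c_0>0$'' is not justified and is in general false. A $|D^cf|$-typical point $x$ lies in $\Omega\setminus S_f$, so $x$ is a Lebesgue point and $f^*$ is $1$-finely continuous there; hence an adversary can take $U$ to be a fine-continuity neighbourhood on which $|f^*-f^*(x)|<\varepsilon$, forcing $L_{f^*,U}(x,r)<\varepsilon$ for \emph{all} $r$. Your lower bound on $l$ coming from points on the hyperplane $P$ is then only $l\le\varepsilon$, and you get no information on $L/l$. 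The underlying issue is that the blowup $f_{r_k}\to w$ holds only in $L^1$/strict $\BV$, with a normalising factor $|Df|(Q(x,r_k))/r_k^{n-1}$ that need not be bounded away from $0$; it gives no pointwise comparison between $f^*(x)$ and values of $f^*$ at specific points (on $P$, in cones, or anywhere else) at scale $r_k$. Invoking quasicontinuity or fine continuity of $f^*$ does not help here, because those produce upper bounds on $|f^*(z)-f^*(x)|$, not the separation-of-scales $l\ll L$ that you need.

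The paper resolves this by abandoning the pointwise approach at $x$ entirely. It introduces auxiliary quantities $H_{f^*}^{\mathrm{fine},k,k'}$, uses the blowup together with strict convergence \emph{on one-dimensional sections} (upgraded to uniform convergence on a.e.\ line in the $\nu$-direction via Lemma~\ref{lem:strict and uniform conv}) to locate, inside each small cube $Q(x,r_i)$, a set $A_i$ of \emph{other} points $y$ carrying a fixed positive fraction of $|D^cf|(Q(x,r_i))$ and satisfying $H_{f^*,U_y}(x+r_iy,\,c r_i)>M$ for every admissible fine neighbourhood $U_y$; a covering/density argument (Lemma~\ref{lem:nonmeasurable}, based on Morse's covering theorem) then forces $H_{f^*}^{\mathrm{fine},k,k'}>M$ for $|D^cf|$-a.e.\ point. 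Two sub-cases (according to whether the blowup $w$ has a jump or not) require separate, rather delicate constructions. This indirect, measure-theoretic mechanism is precisely what is missing from your sketch.
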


Here $f^*$ is the precise representative of $f$.

\section{Notation and definitions}\label{sec:prelis}

\subsection{Basic notation and definitions}

We will always work in the Euclidean space $\R^n$, $n\ge 1$.
We denote the $n$-dimensional Lebesgue measure by $\mathcal L^n$.
We denote the $s$-dimensional Hausdorff measure by $\mathcal H^{s}$, $s\ge 0$.
If a property holds outside a set of Lebesgue measure zero, we say that it holds almost everywhere,
or ``a.e.''.
With other measures, we write more explicitly e.g. ``$\mathcal H^{n-1}$-a.e.''.
The Borel $\sigma$-algebra on a set $H\subset \R^n$ is denoted by $\mathcal B(H)$.

We write $B(x,r)$ for an open ball in $\R^n$ with center $x$
and radius $r$, that is, $\{y \in \R^n \colon |y-x|<r\}$.
We denote the $\mathcal L^n$-measure of the unit ball by $\omega_n$.
We always work with the Euclidean norm $|\cdot|$ for vectors $v\in \R^n$ as well as for
matrices $A\in \R^{k\times n}$, $k\ge 1$.
For $a\in\R^k$ and $b\in\R^n$, we define the tensor product
$a\otimes b \coloneqq a b^T\in \R^{k\times n}$, where $a,b$ are considered
as column vectors.
Given $x\in \R^n$, $r>0$, and $\nu\in \R^n$ with $|\nu|=1$, we denote by
$Q_{\nu}(x,r)$ an open cube with center $x$ and side length $r$, such that $\nu$ is perpendicular to one of the faces.
Note that such a cube is not unique.
We denote by $Q(x,r)$ an open cube with sides in the directions of the coordinate axes
(such a cube is unique).

If a function $f$ is in $L^1(H;\R^k)$, for $k\in\N$ and
 for some $\mathcal L^n$-measurable set $H \subset \R^n$ of nonzero and finite Lebesgue
measure, we write
\[
f_H:=\vint{H} f(y) \,d\mathcal L^n(y) \coloneqq \frac{1}{\mathcal L^n(H)} \int_H f(y) \,d\mathcal L^n(y)
\] 
for its mean value in $H$.

We will always denote by
$\Om\subset\R^n$ an open set. Let $k\in\N$.
The Sobolev space $W^{1,1}(\Om;\R^k)$ consists of functions $f\in L^1(\Om;\R^k)$
whose first weak partial derivatives $D_lf_j$, $j=1,\ldots,k$, $l=1,\ldots,n$, belong to $L^1(\Om)$.
The Sobolev norm is
\[
\Vert f\Vert_{W^{1,1}(\Om;\R^k)}:=\Vert f\Vert_{L^1(\Om;\R^k)}+\Vert \nabla f\Vert_{L^1(\Om;\R^{k\times n})}.
\]

Let $S\subset \R^n$ be an $\mathcal H^{n-1}$-measurable set.
We say that $S$ is countably $\mathcal H^{n-1}$-rectifiable if there exist countably many Lipschitz
functions $h_j\colon \R^{n-1}\to \R^n$ such that
\[
\mathcal H^{n-1}\left(S\setminus \bigcup_{j=1}^{\infty} h_j(\R^{n-1})\right)=0.
\]
Let $H\subset \R^{n}$ be an $n-1$-dimensional hyperplane, let $\pi$ be the orthogonal projection onto $H$,
and let $\pi^{\perp}$ be the orthogonal projection onto the orthogonal complement $H^{\perp}$.
Let $h\colon H\to H^{\perp}$ be an $L$--Lipschitz function, and let
\[
S:=\{x\in\R^n\colon h(\pi(x))=\pi^{\perp}(x)\}
\]
be the graph of $h$. We call this an $L$--Lipschitz $n-1$-graph, also if $h$ is only defined on a subset of $H$.
Every countably $\mathcal H^{n-1}$-rectifiable set can be presented, modulo an $\mathcal H^{n-1}$-negligible
set, as an at most countable union of disjoint $1$--Lipschitz $n-1$-graphs, see \cite[Proposition 2.76]{AFP}.
We can also assume the $1$--Lipschitz $n-1$-graphs to be $\mathcal H^{n-1}$-measurable, since a graph defined on an 
entire  $n-1$-dimensional hyperplane is a closed set.

\subsection{Radon measures}\label{sec:Radon}
In this and the following subsections, we will follow the monograph \cite{AFP}.
Let $\Om\subset\R^n$ be an open set and $\ell \in\N$.
We  denote by $\mathcal M(\Om;\R^{\ell})$ the Banach space of vector-valued
Radon measures. For $\mu \in \mathcal M(\Om;\R^{\ell})$, the
total variation $|\mu|(\Om)$ is defined
with respect to the Euclidean norm on $\R^{\ell}$.
We further denote the set of positive measures by $\mathcal M^+(\Om)$.

For a vector-valued Radon measure $\gamma\in\mathcal M(\Om;\R^{\ell})$
and a positive Radon measure $\mu\in\mathcal M^+(\Om)$, we
can write the Lebesgue--Radon--Nikodym decomposition
\[
\gamma=\gamma^a+\gamma^s=\frac{d\gamma}{d\mu}\,d\mu+\gamma^s
\]
of $\gamma$ with respect to $\mu$,
where $\frac{d\gamma}{d\mu}\in L^1(\Om,\mu;\R^{\ell})$.

For open sets $E\subset \R^{n-m}$, $F\subset \R^m$ with $m \in \{1,\ldots,n-1\}$,
a \emph{parametrized measure} $(\nu_{\ys})_{\ys \in E}$ is a mapping from $E$ to the set
$\mathcal M(F;\R^{\ell})$ of vector-valued Radon measures on $F$. It is said to be
\emph{weakly* $\mu$-measurable}, for $\mu\in\mathcal M^+(E)$, if $\ys\mapsto \nu_{\ys}(B)$ is
$\mu$-measurable for all Borel sets $B\in\mathcal{B}(F)$ (it suffices to check this for open subsets).
Equivalently, $(\nu_{\ys})_{\ys\in E}$ is weakly* $\mu$-measurable if the function
$\ys \mapsto \int_F f(\ys,t)\,d\nu_x(t)$ is $\mu$-measurable for every bounded
$\mathcal B_\mu(E) \times \mathcal B(F)$-measurable function $f\colon E\times F\to \R$
(see \cite[Proposition 2.26]{AFP}), where $\mathcal B_\mu(E)$ denotes the $\mu$-completion of $\mathcal B(E)$.
Suppose that we additionally have
\[
\int_{E}|\nu_{\ys}|(F)\,d\mu(\ys)<\infty.
\]
In this case we denote by $\mu \otimes \nu_\ys$ the generalized product measure defined by
\begin{equation}\label{eq:product measure}
\mu\otimes\nu_\ys(A)\coloneqq \int_E \left(\int_F \ch_A(\ys,t)\,d\nu_\ys(t)\right)\,d\mu(\ys)
\end{equation}
for every $A\in \mathcal B_\mu(E) \times \mathcal B(F)$.

\subsection{Functions of bounded variation}\label{sec:BV functions}

We follow the monograph \cite[Section 3]{AFP}.
Let $k\in\N$, and as before let $\Om\subset\R^n$ be an open set. 
A function
$f\in L^1(\Omega;\R^k)$ is a function of bounded variation,
denoted $f\in \BV(\Omega;\R^k)$, if its weak derivative
is an $\R^{k\times n}$-valued Radon measure with finite total variation. This means that
there exists a (unique) Radon measure $Df$
such that for all $\varphi\in C_c^1(\Omega)$, the integration-by-parts formula
\[
\int_{\Omega}f^j\frac{\partial\varphi}{\partial y_l}\,d\mathcal L^n
=-\int_{\Omega}\varphi\,d D_l f^j,\quad j=1,\ldots,k,\ l=1,\ldots,n,
\]
holds.
If $f$ is the characteristic function of a set $E\subset\R^n$, we say that $E$ is a set of finite perimeter.

Let $f\in L^1_{\loc}(\Om)$.
The precise representative is defined by
\[
f^*(x):=\limsup_{r\to 0}\,\vint{B(x,r)}f\,d\mathcal L^n,\quad x\in \Om.
\]
This is easily seen to be a Borel function.
For $f\in L^1_{\loc}(\Om;\R^k)$, we also define $f^*:=(f_1^*,\ldots,f_k^*)$.

We say that $x\in\Om$ is a Lebesgue point of $f$ if
\[
\lim_{r\to 0}\,\vint{B(x,r)}|f(y)-\widetilde{f}(x)|\,d\mathcal L^n(y)=0
\]
for some $\widetilde{f}(x)\in\R^k$. We denote by $S_f\subset\Om$ the set where
this condition fails and call it the approximate discontinuity set.

Given $x\in\R^n$ and $r>0$, and a unit vector $\nu\in \R^n$, we define the half-balls
\begin{align*}
B_{\nu}^+(x,r)\coloneqq \{y\in B(x,r)\colon \langle y-x,\nu\rangle>0\},\\
B_{\nu}^-(x,r)\coloneqq \{y\in B(x,r)\colon \langle y-x,\nu\rangle<0\},
\end{align*}
where $\langle \cdot,\cdot\rangle$ denotes the inner product.
We say that $x\in \Om$ is an approximate jump point of $f$ if there exist a unit vector $\nu\in \R^n$
and distinct vectors $f^+(x), f^-(x)\in\R^k$ such that
\begin{equation}\label{eq:jump value 1}
\lim_{r\to 0}\,\vint{B_{\nu}^+(x,r)}|f(y)-f^+(x)|\,d\mathcal L^n(y)=0
\end{equation}
and
\begin{equation}\label{eq:jump value 2}
\lim_{r\to 0}\,\vint{B_{\nu}^-(x,r)}|f(y)-f^-(x)|\,d\mathcal L^n(y)=0.
\end{equation}
The set of all approximate jump points is denoted by $J_f$, and the functions
$f^-,f^+$ can be taken to be Borel functions.
For $f\in\BV(\Om;\R^k)$, we have that $\mathcal H^{n-1}(S_f\setminus J_f)=0$, see \cite[Theorem 3.78]{AFP}.

The lower and upper approximate limits of a function $f\in\BV_{\loc}(\Om)$
are defined respectively by
\[
f^{\wedge}(x)\coloneqq 
\sup\left\{t\in\R\colon \lim_{r\to 0}\frac{\mathcal L^n(B(x,r)\cap\{f<t\})}{\mathcal L^n(B(x,r))}=0\right\}
\]
and
\[
f^{\vee}(x)\coloneqq 
\inf\left\{t\in\R\colon \lim_{r\to 0}\frac{\mathcal L^n(B(x,r)\cap\{f>t\})}{\mathcal L^n(B(x,r))}=0\right\},
\]
for all $x\in\Om$.
We interpret the supremum and infimum of an empty set to be $-\infty$ and $\infty$, respectively.
Note that
\begin{equation}\label{eq:representatives outside jump set}
	\widetilde{f}(x)=f^{\wedge}(x)=f^{\vee}(x)
	\quad\textrm{for }x\in \Om\setminus S_f
\end{equation}
and
\begin{equation}\label{eq:representatives in jump set}
	f^{\wedge}(x)=\min\{f^{-}(x),f^+(x)\}
	\quad\textrm{and}\quad
	f^{\vee}(x)=\max\{f^{-}(x),f^+(x)\}
	\quad\textrm{for }
	x\in J_f.
\end{equation}
Given $f\in \BV(\Om;\R^k)$, for every $x\in J_f$ we moreover have
\begin{equation}\label{eq:jump point formula}
	f^*(x)=\frac{f^{-}(x)+f^{+}(x)}{2}.
\end{equation}
We write the Radon-Nikodym decomposition of the variation measure of $f$ into the absolutely continuous and singular parts with respect to $\mathcal L^n$
as
\[
Df=D^a f+D^s f=\nabla f\mathcal L^n+D^s f.
\]
Furthermore, we define the Cantor and jump parts of $Df$ by
\[
D^c f\coloneqq  D^s f\mres (\Om\setminus S_f),\qquad D^j f\coloneqq D^s f\mres J_f.
\]
Here
\[
D^s f \mres J_f(A):=D^s f (J_f\cap A),\quad \textrm{for } D^s f\textrm{-measurable } A\subset \Om.
\]
Since $\mathcal H^{n-1}(S_f\setminus J_f)=0$ and $|Df|$ vanishes on
$\mathcal H^{n-1}$-negligible sets, we get the decomposition (see \cite[Section 3.9]{AFP})
\begin{equation}\label{eq:Daf Dcg Djf}
Df=D^a f+ D^c f+ D^j f.
\end{equation}
For the jump part, we know that
\begin{equation}\label{eq:jump part representation}
	d|D^j f|=|f^{+}-f^-|\,d\mathcal H^{n-1}\mres J_f.
\end{equation}

We say that a sequence $\{f_i\}_{i=1}^{\infty}$ of functions in $\BV(\Om;\R^k)$
converges to $f\in \BV(\Om;\R^k)$ strictly in $\BV(\Om;\R^k)$ if
\[
f_i\to f\ \textrm{ in }L^1(\Om;\R^k)
\quad\textrm{and}
\quad
|Df_i|(\Om)\to |Df|(\Om)
\quad\textrm{as }i\to\infty.
\]

\subsection{One-dimensional sections of $\BV$ functions}\label{subsec:one dimensional sections}

The following notation and results on one-dimensional sections of
$\BV$ functions are given in \cite[Section 3.11]{AFP}.

Let $n=1$. Suppose $f\in\BV_{\loc}(\R)$.
We have $J_f=S_f$, $J_f$ is at most countable,
and $Df (\{x\})=0$ for every $x\in\R\setminus J_f$.
For every $x,\tilde{x}\in \R\setminus J_f$ with $x<\tilde{x}$ we have
\begin{equation}\label{eq:fundamental theorem of calculus for BV}
	f^*(\tilde{x})-f^*(x)=Df((x,\tilde{x})).
\end{equation}
For every $x,\tilde{x}\in \R$ with $x<\tilde{x}$ we have
\begin{equation}\label{eq:fundamental theorem of calculus for BV 2}
	|f^*(x)-f^*(\tilde{x})|\le |Df|([x,\tilde{x}]),
\end{equation}
and the same with $f^*$ replaced by $f^{\wedge}$ or $f^{\vee}$, or any pairing of these.

We say that $f^{\vee}(x)-f^{\wedge}(x)=|Df| (\{x\})$ is the
\emph{jump size} of $f$ at point $x$.
For a jump point $x\in J_f$ and a sequence $x_j\to x$, we have
\begin{equation}\label{eq:two sided continuity}
\lim_{j\to\infty}\min\{|f^*(x_j)-f^{\wedge}(x)|,|f^*(x_j)-f^{\vee}(x)|\}=0
\end{equation}
and the same with $f^*(x_j)$ replaced by $f^{\wedge}(x_j)$ or $f^{\vee}(x_j)$.

For a point $x=(x_1,\ldots,x_n)\in \R^n$, denote by $\pi_n$ the projection
\[
\pi_n(x):=(x_1,\ldots,x_{n-1}),
\]
and analogously we define $\pi_j$ for $j=1,\ldots,n-1$.
We also denote balls and cubes in $\R^{n-1}$ by $B_{n-1}(x,r)$ and $Q_{n-1}(x,r)$, respectively.

In $\R^n$, we denote the standard basis vectors by $e_i$, $i=1,\ldots,n$.
For a set $A \subset \R^n$ we denote the slices of $A$ by
\[
A^{n}_{z}\coloneqq \{t\in\R \colon (z,t) \in A\},\quad z\in \pi_n(A).
\]
Analogously, we define $A^{j}_{z}$ for all $j=1,\ldots,n$.
Usually we consider $A^{n}_{z}$; denote it by $A_{z}$.
For $f\in\BV(\Om;\R^k)$, we denote
\[
f_{z,n}(t):= f(z,t),\quad t\in \Om_z,\ z\in \pi_n(\Om).
\]
Again to avoid too heavy notation, write $f_{z}$ instead of $f_{z,n}$.
We know that $f_z\in \BV(\Om_z)$ 
for $\mathcal L^{n-1}$-almost every $z\in\pi_n(\Om)$  (see \cite[Theorem 3.103]{AFP}).

Analogously, we define $f_{z,l}$ for all $l=1,\ldots,n$.

Recall \eqref{eq:product measure}.
Denoting $D_l f\coloneqq \langle Df,e_l\rangle$, we further have
\[
D_l f=\mathcal L^{n-1}\otimes D f_{z,l}
\quad\textrm{and}\quad
D^j_l f=\mathcal L^{n-1}\otimes D^j f_{z,l},
\]
see \cite[Theorem 3.107 \& Theorem 3.108]{AFP}.
It follows that
\begin{equation}\label{eq:slice representation for total variation}
	|D_l f|=\mathcal L^{n-1}\otimes |D f_{z,l}| 
	\quad\textrm{and}\quad
	|D^j_l f|=\mathcal L^{n-1}\otimes |D^j f_{z,l}|,
\end{equation}
see \cite[Corollary 2.29]{AFP}. Moreover, for $\mathcal L^{n-1}$-almost every 
$z\in\pi_n(\Om)$ we have
\begin{equation}\label{eq:sections and jump sets}
	J_{f_{z}}=(J_f)_{z}\quad\textrm{and}\quad
	(f^*)_{z}(t)=(f_{z})^*(t)\ \ \textrm{for every }t\in \R\setminus J_{f_{z}},
\end{equation}
see \cite[Theorem 3.108]{AFP}.
By \cite[Theorem 3.108]{AFP} we also know that for $\mathcal L^{n-1}$-almost every 
$z\in\pi_n(\Om)$, we have
\[
\{(f_z)^{-}(t),(f_z)^{+}(t)\}=\{(f^{-})_z(t),(f^{+})_z(t)\}\quad\textrm{for every }t\in (J_f)_z.
\]
Assume $k=1$ so that $f\in \BV(\Om)$;
recalling \eqref{eq:representatives outside jump set} and \eqref{eq:representatives in jump set},
for $\mathcal L^{n-1}$-almost every $z\in\pi_n(\Om)$ we have
\begin{equation}\label{eq:upper and lower repr sections}
	(f_z)^{\wedge}(t)=(f^{\wedge})_z(t)
	\quad\textrm{and}\quad 
	(f_z)^{\vee}(t)=(f^{\vee})_z(t)
	\quad \textrm{for every }t\in \Om_z.
\end{equation}

\subsection{Capacities and fine topology}

The (Sobolev) $1$-capacity of a set $A\subset \R^n$ is defined by
\[
\capa_1(A):=\inf \Vert u\Vert_{W^{1,1}(\R^n)},
\]
where the infimum is taken over Sobolev functions $u\in W^{1,1}(\R^n)$ satisfying
$u\ge 1$ in a neighborhood of $A$.

Given sets $A\subset W\subset \R^n$, where $W$ is open, the relative $1$-capacity is defined by
\[
\rcapa_1(A,W):=\inf \int_{W}|\nabla u|\,d\mathcal L^n,
\]
where the infimum is taken over functions $u\in W_0^{1,1}(W)$ satisfying $u\ge 1$ in a neighborhood
of $A$.
The class $W_0^{1,1}(W)$ is the closure of $C^1_c(W)$ in the $W^{1,1}(\R^n)$-norm.

By \cite[Proposition 6.16]{BB}, we know that for a ball $B(x,r)$ and $A\subset B(x,r)$, we have
\begin{equation}\label{eq:capa vs rcapa}
\frac{\capa_1(A)}{1+r}
\le C\rcapa_1(A,B(x,2r)),
\end{equation}
where $C$ is a constant depending only on $n$.

For the proof of the following fact, see e.g. \cite[Lemma 2.16]{L-FC}.
\begin{lemma}\label{lem:capa in small ball}
Suppose $x\in\R^n$, $0<r<1$, and $A\subset B(x,r)$. Then we have
\[
\frac{\mathcal L^n(A)}{\mathcal L^n(B(x,r))}\le C\frac{\capa_1(A)}{r^{n-1}}
\quad\textrm{and}\quad
\rcapa_1(A,B(x,2r))\le C\capa_1(A),
\]
where $C$ is a constant depending only on $n$.
\end{lemma}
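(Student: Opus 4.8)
The plan is to estimate both capacities on the left-hand sides by testing $\capa_1(A)$ with a competitor and then passing to the infimum. By translation assume $x=0$. First make the standard reduction that $\capa_1(A)$ may be computed as the infimum of $\|u\|_{W^{1,1}(\R^n)}$ over those $u\in W^{1,1}(\R^n)$ with $0\le u\le 1$ and $u=1$ a.e.\ on some open set $V\supset A$: for any admissible $u$, the truncation $\min\{\max\{u,0\},1\}$ is again admissible and has no larger $L^1$-norm or total gradient. Fixing such a $u$, the open set $V\cap B(0,r)$ contains $A$, is contained in $B(0,r)$, and carries $u\equiv 1$ a.e.; working with it rather than with $A$ directly sidesteps any measurability question about $A$.

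For the first inequality I would use one-dimensional slicing. Write points of $\R^n$ as $(z,t)$ with $z\in\R^{n-1}$, $t\in\R$. For $\mathcal L^{n-1}$-a.e.\ $z$ the slice $u_z:=u(z,\cdot)$ lies in $W^{1,1}(\R)$, so it has an absolutely continuous representative with $u_z(t)\to 0$ as $t\to\pm\infty$. If the open slice $(V\cap B(0,r))_z$ is nonempty it contains an interval on which $u_z\equiv 1$, so $\int_\R|u_z'|\,d\mathcal L^1\ge 2$, while this slice, being contained in a chord of $B(0,r)$, has length at most $2r$; if the slice is empty its length is $0$. Hence $\mathcal L^1\big((V\cap B(0,r))_z\big)\le r\int_\R|u_z'|\,d\mathcal L^1$ for a.e.\ $z$, and integrating in $z$ with Fubini gives $\mathcal L^n(A)\le\mathcal L^n(V\cap B(0,r))\le r\|\nabla u\|_{L^1(\R^n)}\le r\|u\|_{W^{1,1}(\R^n)}$. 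Taking the infimum over $u$ and dividing by $\mathcal L^n(B(0,r))=\omega_n r^n$ gives the first inequality with $C=\omega_n^{-1}$.

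For the second inequality I would multiply $u$ by a cut-off. Pick $\eta\in C_c^1(B(0,2r))$ with $0\le\eta\le1$, $\eta\equiv1$ on $B(0,r)$ and $|\nabla\eta|\le C(n)/r$, and set $v:=\eta u$. Then $v\in W^{1,1}(\R^n)$ has compact support in $B(0,2r)$, hence $v\in W_0^{1,1}(B(0,2r))$ by mollification, and $v=u=1$ a.e.\ on the open neighbourhood $V\cap B(0,r)$ of $A$, so $v$ is admissible for $\rcapa_1(A,B(0,2r))$. From $\nabla v=\eta\,\nabla u+u\,\nabla\eta$,
\[
\int_{B(0,2r)}|\nabla v|\,d\mathcal L^n\le\|\nabla u\|_{L^1(\R^n)}+\frac{C(n)}{r}\int_{B(0,2r)}u\,d\mathcal L^n.
\]
The final term is the only delicate point: bounding $\int_{B(0,2r)}u$ trivially by $\mathcal L^n(B(0,2r))$ would produce a constant that blows up like $r^{-1}$, so instead one applies H\"older's inequality and then the Gagliardo--Nirenberg--Sobolev inequality on $\R^n$,
\[
\int_{B(0,2r)}u\,d\mathcal L^n\le\|u\|_{L^{n/(n-1)}(\R^n)}\,\mathcal L^n(B(0,2r))^{1/n}\le C(n)\,r\,\|\nabla u\|_{L^1(\R^n)},
\]
which converts the $L^1$-mass of $u$ into $L^1$-mass of $\nabla u$ at exactly the scale $r$ needed. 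Combining, $\int_{B(0,2r)}|\nabla v|\,d\mathcal L^n\le C(n)\|\nabla u\|_{L^1(\R^n)}\le C(n)\|u\|_{W^{1,1}(\R^n)}$, and taking the infimum over $u$ gives $\rcapa_1(A,B(0,2r))\le C(n)\,\capa_1(A)$. Everything else — the truncation reduction, the absolute continuity on a.e.\ line, and the fact that a compactly supported $W^{1,1}$-function lies in $W_0^{1,1}$ of the ambient open set — is routine; for $n=1$ the statement is elementary, and the argument above applies verbatim for $n\ge2$.
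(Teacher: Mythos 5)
Your proof is correct. Note that the paper does not actually prove this lemma: it simply cites \cite[Lemma 2.16]{L-FC}, where the corresponding statement is established in the general setting of metric measure spaces (via relative capacities and Poincar\'e-type tools). Your argument is a self-contained Euclidean alternative, and both halves check out: the truncation reduction is standard; the slicing argument correctly uses that for a.e.\ $z$ the slice $u_z$ has an absolutely continuous representative in $W^{1,1}(\R)$ vanishing at $\pm\infty$, so a nonempty open slice of $\{u=1\}$ forces $\int_\R|u_z'|\ge 2$ while the chord length is at most $2r$, giving $\mathcal L^n(A)\le r\,\capa_1(A)$ and hence the first inequality with the explicit constant $\omega_n^{-1}$; and for the second inequality, the cutoff competitor $v=\eta u$ is admissible for $\rcapa_1(A,B(x,2r))$ (compact support plus mollification gives $v\in W_0^{1,1}(B(x,2r))$, and $v=1$ a.e.\ on the open set $V\cap B(x,r)\supset A$), while the combination of H\"older and Gagliardo--Nirenberg--Sobolev is exactly what is needed to absorb the $r^{-1}$ from $|\nabla\eta|$, yielding a constant depending only on $n$ and uniform in $r<1$ (indeed in all $r>0$). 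Your separate remark for $n=1$ is also fine. What your route buys is an elementary, explicit Euclidean proof with concrete constants; what the paper's citation buys is validity in the broader metric-space framework from which these capacities originate.
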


\begin{definition}\label{def:1 fine topology}
	We say that $A\subset \R^n$ is $1$-thin at the point $x\in \R^n$ if
	\[
	\lim_{r\to 0}\frac{\capa_1(A\cap B(x,r))}{r^{n-1}}=0.
	\]
	We say that a set $U\subset \R^n$ is $1$-finely open if $\R^n\setminus U$ is $1$-thin at every $x\in U$.
	Then we define the $1$-fine topology as the collection of $1$-finely open sets on $\R^n$.
\end{definition}

See also \cite[Section 4]{L-FC} for discussion on Definition \ref{def:1 fine topology}.
In fact, in \cite{L-FC} the criterion
\[
\lim_{r\to 0}\frac{\rcapa_1(A\cap B(x,r),B(x,2r))}{r^{n-1}}=0
\]
for $1$-thinness was used, in the context of more general metric measure spaces.
By \eqref{eq:capa vs rcapa}
and Lemma \ref{lem:capa in small ball},
this is equivalent with our current definition in the Euclidean setting.

\section{Preliminary results}

In this section we collect some preliminary results.

We have the following quasi-semicontinuity result from \cite[Theorem 2.5]{CDLP}.
Alternatively, see
\cite[Theorem 1.1]{LaSh} and \cite[Corollary 4.2]{L-SA}
for a proof of this result in more general metric spaces.

\begin{theorem}\label{thm:quasisemicontinuity}
	Let $\Om\subset \R^n$ be open, let $f\in\BV_{\loc}(\Om)$, and $\eps>0$.
	Then there exists an open set $G\subset\Om$
	such that $\capa_1(G)<\eps$ and $f^{\wedge}|_{\Om\setminus G}$ is finite and lower
	semicontinuous, and $f^{\vee}|_{\Om\setminus G}$ is finite and upper
	semicontinuous.
\end{theorem}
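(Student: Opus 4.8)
The plan is to reduce, via localization, to the case $\Om=\R^n$; to reduce the two assertions to one another (apply everything to $-f$ and use $(-f)^\vee=-f^\wedge$); to dispose of $\{f^\vee=+\infty\}$ and $\{f^\wedge=-\infty\}$ separately; and then, via the coarea formula, to reduce the remaining semicontinuity claim to a statement about a single set of finite perimeter, which is where the real work lies. It suffices to treat $\Om=\R^n$: given the result there, for general $\Om$ one exhausts $\Om$ by open sets $\Om_j$ with $\overline{\Om_j}$ compact and $\overline{\Om_j}\subset\Om_{j+1}$, picks $\eta_j\in C_c^\infty(\Om_{j+1})$ with $\eta_j\equiv 1$ on $\Om_j$, notes $\eta_j f\in\BV(\R^n)$, applies the $\R^n$-case to $\eta_j f$ with parameter $\eps\,2^{-j}$, and — since approximate limits and one-sided semicontinuity are local and $\eta_j f=f$ on $\Om_j$ — glues by $G:=\bigcup_j(G_j\cap\Om_j)$, using countable subadditivity of $\capa_1$. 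For finiteness: by \eqref{eq:representatives outside jump set} and \eqref{eq:representatives in jump set}, $f^\vee<+\infty$ off $S_f$ and on $J_f$, so $\{f^\vee=+\infty\}\subset S_f\setminus J_f$ is $\mathcal H^{n-1}$-negligible, hence $\capa_1$-negligible, and by outer regularity of $\capa_1$ lies in an open set of arbitrarily small capacity, to be folded into the final $G$. Thus it remains to show: for $f\in\BV(\R^n)$ and $\eps>0$ there is an open $G$ with $\capa_1(G)<\eps$ off which $f^\vee$ is upper semicontinuous.

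Next I would pass to superlevel sets. For $t\in\R$ set $E_t:=\{f>t\}$; by the coarea formula $\int_\R P(E_t,\R^n)\,dt=|Df|(\R^n)<\infty$, so fix a countable dense set $T\subset\R$ of levels $t$ with $P(E_t)<\infty$. Writing $A^{(0)}$ for the set of points of density $0$ of a set $A$, the definition of $f^\vee$ together with the monotonicity $E_{t'}\subseteq E_t$ for $t\le t'$ gives
\[
\{f^\vee<s\}=\bigcup_{t\in T,\ t<s}(E_t)^{(0)}\qquad\text{for every }s\in\R .
\]
Hence it is enough to prove: \emph{for every set $E\subset\R^n$ of finite perimeter and every $\delta>0$ there is an open $G_E$ with $\capa_1(G_E)<\delta$ such that $E^{(0)}\cup G_E$ is open} (that is, $E^{(0)}$ is $1$-quasiopen). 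Indeed, taking $\delta$ summable along an enumeration of $T$ and $G$ the union of the corresponding $G_{E_t}$ together with the small-capacity set from the previous paragraph, the displayed identity shows that $\{f^\vee<s\}\setminus G$ is open in $\R^n\setminus G$ for every $s$ — i.e.\ $f^\vee|_{\R^n\setminus G}$ is upper semicontinuous — while $\capa_1(G)<\eps$.

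For the key claim, that $E^{(0)}$ is $1$-quasiopen when $E$ has finite perimeter, I would argue in two steps. First, $E^{(0)}$ is $1$-finely open away from a $\capa_1$-negligible set: for $x\in E^{(0)}$ one has $\mathcal L^n(E\cap B(x,r))=o(r^n)$, and for $\mathcal H^{n-1}$-a.e.\ such $x$ also $P(E,B(x,r))=o(r^{n-1})$, since $\partial^* E$ is countably $\mathcal H^{n-1}$-rectifiable and so $\mathcal H^{n-1}\mres\partial^* E$ has vanishing upper $(n-1)$-density off $\partial^* E$. Testing the relative capacity of $(\R^n\setminus E^{(0)})\cap B(x,r)$, which equals $(E^{(1)}\cup\partial^e E)\cap B(x,r)$, by a cutoff supported in $B(x,2r)$ times a truncated mollification of $\chi_E$, and invoking \eqref{eq:capa vs rcapa} and Lemma~\ref{lem:capa in small ball}, yields
\[
\frac{\capa_1\big((\R^n\setminus E^{(0)})\cap B(x,r)\big)}{r^{n-1}}\ \longrightarrow\ 0\qquad(r\to 0)
\]
for $\mathcal H^{n-1}$-a.e.\ $x\in E^{(0)}$; that is, $\R^n\setminus E^{(0)}$ is $1$-thin at $\mathcal H^{n-1}$-a.e.\ point of $E^{(0)}$, so $E^{(0)}$ coincides with a $1$-finely open set up to an $\mathcal H^{n-1}$-negligible (hence $\capa_1$-negligible) set, which is absorbed into $G_E$. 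Second, I would upgrade $1$-fine openness to $1$-quasiopenness by the Choquet property of the $1$-fine topology, established in \cite{L-FC}.

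The step I expect to be the real obstacle is the first one. Making the capacity decay above rigorous is delicate: the topological closure of $\R^n\setminus E^{(0)}$ can be strictly — even substantially — larger than $\R^n\setminus E^{(0)}$ itself, so the test function cannot be a mollified indicator of a neighborhood of that set but must be built from superlevel sets of mollifications of $\chi_E$, with the truncation level and the scales balanced against $P(E,B(x,2r))$; and one must pin down exactly which density-zero points form the $\capa_1$-negligible exceptional set. This is the technical heart of the cited proofs — \cite[Theorem 2.5]{CDLP} in the Euclidean case, and \cite[Theorem 1.1]{LaSh}, \cite[Corollary 4.2]{L-SA} in metric spaces, where boxing (quasi-Minkowski content) estimates replace the fine-topology machinery — whereas the localization, the coarea reduction, and the fine-to-quasi passage are routine once those results are in hand.
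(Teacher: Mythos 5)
This theorem is not proved in the paper at all: it is quoted from the literature, with \cite[Theorem 2.5]{CDLP} given as the Euclidean source and \cite[Theorem 1.1]{LaSh}, \cite[Corollary 4.2]{L-SA} as metric-space versions. So there is no internal argument to compare your proposal against; the relevant comparison is with those references, and your outline is essentially the standard route they take. Your reduction machinery is sound: the exhaustion/cutoff localization to $\Om=\R^n$ works (approximate limits are local and $\eta_j f=f$ on $\Om_j$, and countable subadditivity of $\capa_1$ handles the gluing); the symmetry $(-f)^{\vee}=-f^{\wedge}$ reduces to one statement; all infinite values of both $f^{\wedge}$ and $f^{\vee}$ (not just $f^{\vee}=+\infty$ and $f^{\wedge}=-\infty$) lie in $S_f\setminus J_f$, which is $\mathcal H^{n-1}$-negligible, hence of zero $1$-capacity and coverable by an open set of small capacity; and the identity $\{f^{\vee}<s\}=\bigcup_{t\in T,\,t<s}(E_t)^{(0)}$ together with coarea correctly reduces everything to the claim that $E^{(0)}$ is $1$-quasiopen for a set $E$ of finite perimeter.

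That last claim, however, is exactly the content of the cited results, and it is the one place where your proposal stops short of a proof: the capacity-decay estimate
$\capa_1\bigl((\R^n\setminus E^{(0)})\cap B(x,r)\bigr)=o(r^{n-1})$ at $\mathcal H^{n-1}$-a.e.\ $x\in E^{(0)}$ is only sketched (mollified, truncated indicators tested against the relative capacity), and you correctly identify it as the technical heart — in the metric-space proofs it is carried by boxing-type inequalities rather than the construction you indicate. So as a self-contained argument there is a gap precisely there; as an account of how the quoted theorem is proved in the literature, your outline is accurate and matches the paper's treatment, which simply cites it. Two small corrections: the Choquet property you invoke to pass from $1$-fine openness (up to a $\capa_1$-null set) to $1$-quasiopenness is in \cite{L-CK}, not \cite{L-FC}; and in the first step of that passage you should state explicitly that $\mathcal H^{n-1}$-null sets have zero $1$-capacity, since that is what lets you absorb the exceptional set into $G_E$.
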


\begin{lemma}\label{lem:capacity and Hausdorff measure}
	Let $A\subset \R^n$. Then
	\[
	2\mathcal H^{n-1}(\pi_n(A))\le \capa_1(A).
	\]
\end{lemma}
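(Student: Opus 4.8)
The plan is to estimate from below the $W^{1,1}$-norm of an arbitrary competitor in the definition of $\capa_1(A)$ and then pass to the infimum. Fix $u\in W^{1,1}(\R^n)$ with $u\ge 1$ in some open set $V\supset A$; if no such $u$ exists then $\capa_1(A)=\infty$ and there is nothing to prove. Note that $\pi_n(V)$ is open (hence $\mathcal L^{n-1}$-measurable), that $\pi_n(A)\subset\pi_n(V)$, and that on the hyperplane $\R^{n-1}$ the Hausdorff measure $\mathcal H^{n-1}$ coincides with $\mathcal L^{n-1}$ under the normalization used here; thus it suffices to show $\|u\|_{W^{1,1}(\R^n)}\ge 2\mathcal L^{n-1}(\pi_n(V))$.

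First I would slice in the last coordinate. By the Fubini-type theorem for Sobolev functions, for $\mathcal L^{n-1}$-a.e.\ $z\in\R^{n-1}$ the section $u_z:=u(z,\cdot)$ belongs to $W^{1,1}(\R)$ with $u_z'=(\partial_n u)(z,\cdot)$ a.e., and
\[
\int_{\R^{n-1}}\Big(\int_\R|u_z'(t)|\,dt\Big)\,dz=\int_{\R^n}|\partial_n u|\,d\mathcal L^n\le\int_{\R^n}|\nabla u|\,d\mathcal L^n\le\|u\|_{W^{1,1}(\R^n)}.
\]
I discard the corresponding $\mathcal L^{n-1}$-null set of ``bad'' $z$, and also the null set of $z$ for which $u_z$ fails to agree $\mathcal L^1$-a.e.\ with the slice of $u$ (so that $u\ge 1$ a.e.\ on $V$ gives $u_z\ge 1$ a.e.\ on $V_z$).

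The key step is the one-dimensional estimate: for every good $z\in\pi_n(V)$ one has $\int_\R|u_z'(t)|\,dt\ge 2$. Indeed, $V_z$ is a nonempty open subset of $\R$, so it contains an interval on which $u_z\ge 1$ a.e.; hence the absolutely continuous representative of $u_z$ takes a value $u_z(t_0)\ge 1$ at some $t_0$. Since $u_z'\in L^1(\R)$ this representative has finite limits as $t\to\pm\infty$, and since $u_z\in L^1(\R)$ both limits equal $0$; therefore
\[
1\le u_z(t_0)=-\int_{t_0}^{\infty}u_z'(t)\,dt\le\int_{t_0}^{\infty}|u_z'(t)|\,dt
\quad\text{and}\quad
1\le u_z(t_0)=\int_{-\infty}^{t_0}u_z'(t)\,dt\le\int_{-\infty}^{t_0}|u_z'(t)|\,dt,
\]
and adding these gives the claim. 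Combining it with the displayed Fubini identity, $\|u\|_{W^{1,1}(\R^n)}\ge\int_{\pi_n(V)}2\,dz=2\mathcal L^{n-1}(\pi_n(V))\ge 2\mathcal H^{n-1}(\pi_n(A))$, and taking the infimum over $u$ finishes the proof.

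I do not expect any serious obstacle: the only points requiring care are the bookkeeping of the $\mathcal L^{n-1}$-null sets of bad slices and of representatives, and the elementary fact that a $W^{1,1}(\R)$ function tends to $0$ at $\pm\infty$; everything else is the fundamental theorem of calculus on each good slice together with Fubini's theorem.
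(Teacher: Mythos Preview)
Your proof is correct and follows essentially the same approach as the paper's: slice in the $e_n$-direction, use that on each relevant slice the Sobolev function must rise from $0$ at $\pm\infty$ to at least $1$, giving total variation $\ge 2$, then integrate over $\R^{n-1}$ and pass to the infimum. Your version is in fact slightly more careful than the paper's, since by working with the open neighborhood $V$ you sidestep any measurability issue with $\pi_n(A)$, and you spell out the one-dimensional argument via the absolutely continuous representative explicitly.
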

\begin{proof}
	Consider $u\in W^{1,1}(\R^n)$ with $u\ge 1$ in a neighborhood of $A$.
	Then for $\mathcal L^{n-1}$-a.e. $z\in \pi_n(A)$, we have
	\[
	\int_{A_z}\left|\frac{\partial u}{\partial x_n}\right|\,ds\ge 2.
	\]
	Integrating over $\R^{n-1}$, we get
	\[
	\int_{\R^n}|\nabla u|\,dx\ge 2\mathcal H^{n-1}(\pi_n(A)).
	\]
	Thus $\Vert u\Vert_{W^{1,1}(\R^n)}\ge 2\mathcal H^{n-1}(\pi_n(A))$ and
	we get the result by taking infimum over all such $u$.
\end{proof}

\begin{lemma}\label{lem:capacity and Hausdorff measure 2}
	Let $D\subset \R^{n-1}$, let $h\colon D\to \R$ be $1$-Lipschitz, and let $S\subset\R^n$
	be the graph of $h$, that is, $S=\{x=(z,t)\in D\times\R \colon t=h(z)\}$. Then
	\[
	\mathcal H^{n-1}(S)\le (2\sqrt{2})^{n-1}\capa_1(S).
	\]
\end{lemma}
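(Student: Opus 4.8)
My plan is to deduce the bound from Lemma~\ref{lem:capacity and Hausdorff measure} together with the elementary fact that a Lipschitz map distorts $\mathcal H^{n-1}$ by a controlled factor. The point is that the vertical projection $\pi_n$ maps the graph $S$ bijectively onto its domain: since $S=\{(z,t)\in D\times\R\colon t=h(z)\}$, we have $\pi_n(S)=D$. Hence Lemma~\ref{lem:capacity and Hausdorff measure}, applied with $A=S$, gives at once
\[
2\,\mathcal H^{n-1}(D)=2\,\mathcal H^{n-1}(\pi_n(S))\le \capa_1(S).
\]
In particular we may assume $\capa_1(S)<\infty$, so that $\mathcal H^{n-1}(D)<\infty$.

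Next I would estimate $\mathcal H^{n-1}(S)$ against $\mathcal H^{n-1}(D)$ using that $S$ is a Lipschitz image of $D$. Consider $\Phi\colon D\to\R^n$, $\Phi(z):=(z,h(z))$, which parametrizes $S$, that is, $\Phi(D)=S$. Since $h$ is $1$-Lipschitz,
\[
|\Phi(z)-\Phi(z')|^2=|z-z'|^2+|h(z)-h(z')|^2\le 2\,|z-z'|^2,
\]
so $\Phi$ is $\sqrt 2$-Lipschitz. An $L$-Lipschitz map increases $(n-1)$-dimensional Hausdorff measure by at most the factor $L^{n-1}$, and $\mathcal H^{n-1}$ agrees with $\mathcal L^{n-1}$ on subsets of $\R^{n-1}$; therefore
\[
\mathcal H^{n-1}(S)=\mathcal H^{n-1}(\Phi(D))\le (\sqrt 2)^{n-1}\,\mathcal H^{n-1}(D).
\]

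Combining the two displays yields $\mathcal H^{n-1}(S)\le \tfrac12(\sqrt2)^{n-1}\,\capa_1(S)$, which is in fact stronger than the asserted inequality because $\tfrac12(\sqrt2)^{n-1}\le(2\sqrt2)^{n-1}$ for every $n\ge 1$; so the lemma follows. I do not anticipate a real difficulty here. The only things to watch are bookkeeping: $D$ and $\pi_n(S)$ need not be Lebesgue measurable, but this is harmless since Lemma~\ref{lem:capacity and Hausdorff measure} and the Lipschitz-image inequality both hold for arbitrary sets when $\mathcal H^{n-1}$ is read as an outer measure; and one should use the normalization of $\mathcal H^{n-1}$ under which $\mathcal H^{n-1}=\mathcal L^{n-1}$ on $\R^{n-1}$, which is the convention making the Lipschitz-image bound free of an extra constant. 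A more hands-on alternative, covering $S$ directly by the images under $\Phi$ of small balls covering $D$ and estimating the variation of a competitor $u\in W^{1,1}(\R^n)$ with $u\ge1$ near $S$, leads to the same conclusion but is less economical.
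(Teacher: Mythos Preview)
Your proof is correct and follows essentially the same two-step approach as the paper: first invoke Lemma~\ref{lem:capacity and Hausdorff measure} with $A=S$ to get $2\,\mathcal H^{n-1}(D)\le\capa_1(S)$, then bound $\mathcal H^{n-1}(S)$ against $\mathcal H^{n-1}(D)$ using the Lipschitz parametrization $z\mapsto(z,h(z))$. Your constant $(\sqrt 2)^{n-1}$ in the second step is in fact sharper than the paper's $(2\sqrt 2)^{n-1}$, but this is immaterial for the application.
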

\begin{proof}
	By Lemma \ref{lem:capacity and Hausdorff measure}, we have $2\mathcal H^{n-1}(D)\le \capa_1(S)$.
	On the other hand, since $h$ is $1$-Lipschitz, it is easy to check from the definition of Hausdorff measures that
	$\mathcal H^{n-1}(S)\le (2\sqrt{2})^{n-1}\mathcal H^{n-1}(D)$, and so we get the result.
\end{proof}

\begin{lemma}\label{lem:rectifiable and cap}
	Let $S$ be a countably $\mathcal H^{n-1}$-rectifiable set with $\mathcal H^{n-1}(S)<\infty$.
	Then for $\mathcal H^{n-1}$-a.e. $x\in S$, given any $1$-finely open set $U\ni x$ we have
	\[
	\lim_{r\to 0}\frac{\mathcal H^{n-1}(B(x,r)\cap S\cap U)}{\omega_{n-1} r^{n-1}}= 1.
	\]
\end{lemma}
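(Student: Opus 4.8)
The plan is to combine the known density property of rectifiable sets with respect to $\mathcal H^{n-1}$ (namely, that at $\mathcal H^{n-1}$-a.e. point of $S$ the set $S$ has density $1$ in the usual sense, see \cite[Theorem 2.83]{AFP}) with the capacitary smallness of the complement of a $1$-finely open set. First I would reduce to the case of an $\mathcal H^{n-1}$-measurable set $S$ that is a graph: using \cite[Proposition 2.76]{AFP} as recalled in Section \ref{sec:prelis}, write $S$ modulo an $\mathcal H^{n-1}$-null set as an at most countable disjoint union $\bigcup_j S_j$ of $1$--Lipschitz $n-1$-graphs. Since at $\mathcal H^{n-1}$-a.e. point of a given $S_j$ the other pieces $S_i$, $i\neq j$, together with the exceptional null set contribute density $0$ (again by \cite[Theorem 2.83]{AFP} applied to $S\setminus S_j$ and to the null set), it suffices to prove the claim for a single $1$--Lipschitz $n-1$-graph $S=S_j$, with $B(x,r)\cap S\cap U$ in place of $B(x,r)\cap S_j\cap U$.

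Next, fix such a graph $S$, the graph of a $1$--Lipschitz $h\colon D\to\R$, $D\subset\R^{n-1}$, and let $x\in S$ be a point of density $1$ for $S$ (with respect to $\mathcal H^{n-1}$), which is $\mathcal H^{n-1}$-a.e. point. Let $U\ni x$ be $1$-finely open, so $A:=\R^n\setminus U$ is $1$-thin at $x$, i.e. $\capa_1(A\cap B(x,r))/r^{n-1}\to 0$. The key estimate is that the portion of $S$ near $x$ that lies outside $U$ is $\mathcal H^{n-1}$-negligible to first order: we estimate
\[
\mathcal H^{n-1}(B(x,r)\cap S\setminus U)\le \mathcal H^{n-1}(B(x,r)\cap S\cap A).
\]
Now $B(x,r)\cap S\cap A$ is itself (a subset of) the graph of the restriction of the $1$-Lipschitz function $h$ to the set $\pi_n(B(x,r)\cap S\cap A)\subset\R^{n-1}$, so Lemma \ref{lem:capacity and Hausdorff measure 2} applies and gives
\[
\mathcal H^{n-1}(B(x,r)\cap S\cap A)\le (2\sqrt2)^{n-1}\capa_1(B(x,r)\cap S\cap A)\le (2\sqrt2)^{n-1}\capa_1(A\cap B(x,r)).
\]
By $1$-thinness the right-hand side is $o(r^{n-1})$, hence $\mathcal H^{n-1}(B(x,r)\cap S\setminus U)=o(r^{n-1})=o(\omega_{n-1}r^{n-1})$. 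Combining this with the density-$1$ property $\mathcal H^{n-1}(B(x,r)\cap S)/(\omega_{n-1}r^{n-1})\to 1$ yields
\[
\frac{\mathcal H^{n-1}(B(x,r)\cap S\cap U)}{\omega_{n-1}r^{n-1}}
=\frac{\mathcal H^{n-1}(B(x,r)\cap S)}{\omega_{n-1}r^{n-1}}
-\frac{\mathcal H^{n-1}(B(x,r)\cap S\setminus U)}{\omega_{n-1}r^{n-1}}\longrightarrow 1,
\]
which is the assertion.

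The main subtlety, and the step I would be most careful about, is the monotonicity/graph argument guaranteeing that Lemma \ref{lem:capacity and Hausdorff measure 2} may be invoked for the piece $B(x,r)\cap S\cap A$: one needs that this piece is contained in the graph of $h$ over an $\mathcal H^{n-1}$-measurable (or at least outer-measurable) subset of $\R^{n-1}$, which follows because $S$ itself is a graph over $D$ and projection commutes with intersections; measurability issues are harmless since Lemma \ref{lem:capacity and Hausdorff measure 2} only needs the outer Hausdorff measure, and $\capa_1$ is defined for arbitrary sets. A minor point is that $r<1$ may be assumed, so that Lemma \ref{lem:capacity and Hausdorff measure 2} (whose proof via Lemma \ref{lem:capacity and Hausdorff measure} has no such restriction) is in any case available; no use of the small-ball hypothesis of Lemma \ref{lem:capa in small ball} is needed here. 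Everything else is bookkeeping with countable unions and the elementary fact that a countable union of $\mathcal H^{n-1}$-null exceptional sets is $\mathcal H^{n-1}$-null.
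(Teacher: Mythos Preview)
Your proof is correct and follows essentially the same route as the paper: decompose $S$ (modulo a null set) into disjoint $1$-Lipschitz $(n-1)$-graphs $S_j$, use the density theorems \cite[Theorems 2.56, 2.83]{AFP} to reduce to a single graph at density-$1$ points, and then invoke Lemma \ref{lem:capacity and Hausdorff measure 2} together with $1$-thinness of $\R^n\setminus U$ to show that $\mathcal H^{n-1}(B(x,r)\cap S_j\setminus U)=o(r^{n-1})$. The paper's argument is the same, only organized slightly differently (it keeps the pieces $S_j$ and $S\setminus S_j$ separate rather than first reducing to a single graph).
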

\begin{proof}
	We have $S=\bigcup_{j=1}^{\infty}S_j\cup N$, where each $S_j$ is a
	$\mathcal H^{n-1}$-measurable $1$-Lipschitz $n-1$-graph
	and $\mathcal H^{n-1}(N)=0$. Consider $x\in S_j$;
	we can assume that $S_j$ is the graph of $h\colon D\to \R$
	where $D\subset \R^{n-1}$ and $h$ is $1$-Lipschitz.
	Excluding $\mathcal H^{n-1}$-negligible sets, we
	can also assume that
	\[
	\lim_{r\to 0}\frac{\mathcal H^{n-1}(B(x,r)\cap S_j)}{\omega_{n-1} r^{n-1}}=1
	\quad\textrm{and}\quad
	\lim_{r\to 0}\frac{\mathcal H^{n-1}(B(x,r)\cap S\setminus S_j)}{\omega_{n-1} r^{n-1}}=0,
	\]
	see \cite[Theorem 2.56, Theorem 2.83]{AFP}.
	By Lemma \ref{lem:capacity and Hausdorff measure 2},
	\begin{align*}
		\limsup_{r\to 0}\frac{\mathcal H^{n-1}(B(x,r)\cap S_j\setminus U)}{\omega_{n-1} r^{n-1}}
		&\le (2\sqrt{n})^{n-1}\limsup_{r\to 0}\frac{\capa_1(B(x,r)\cap S_j\setminus U)}{\omega_{n-1} r^{n-1}}\\
		&=0,
	\end{align*}
	since $U$ is $1$-finely open.
	Combining these facts, we get the result.
\end{proof}

Recall Definition \ref{def:1 fine topology}.
The following theorem is proved in \cite[Theorem 3.6]{L-FC}.

\begin{theorem}\label{thm:fine diff}
	Let $f\in \BV_{\loc}(\R^n;\R^n)$. Then for a.e. $x\in \R^n$ we find a $1$-finely open set
	$U\ni x$ such that
	\[
	\lim_{U\setminus \{x\}\ni y\to x}\frac{|f^*(y)-f^*(x)-\nabla f(x)(y-x)|}{|y-x|}=0.
	\]
\end{theorem}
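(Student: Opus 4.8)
First I reduce to a scalar function and linearise at a generic point. Writing $f=(f^1,\dots,f^n)$ with $f^i\in\BV_{\loc}(\R^n)$, a finite intersection of $1$-finely open sets containing a point is again $1$-finely open — its complement is a finite union of sets $1$-thin at that point, hence $1$-thin there, by subadditivity of $\capa_1$ — so it suffices to prove the statement for each scalar component and then intersect the resulting neighbourhoods. Thus assume $f\in\BV_{\loc}(\R^n)$. For $\mathcal L^n$-a.e.\ $x$ the following hold simultaneously: $x$ is a Lebesgue point of $f$; $x$ is a Lebesgue point of $\nabla f$ with respect to $\mathcal L^n$; and $\lim_{r\to0}|D^sf|(B(x,r))/r^n=0$ (Besicovitch differentiation, since $|D^sf|\perp\mathcal L^n$). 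Fixing such an $x$, set $g:=f-f^*(x)-\nabla f(x)(\cdot-x)$, so $g\in\BV_{\loc}(\R^n)$, $g^*=f^*-f^*(x)-\nabla f(x)(\cdot-x)$, $g^*(x)=0$, and $\tau(r):=|Dg|(B(x,r))/r^n=|Df-\nabla f(x)\mathcal L^n|(B(x,r))/r^n\to0$ as $r\to0$. The goal becomes: find a $1$-finely open $U\ni x$ with $g^*(y)=o(|y-x|)$ as $U\setminus\{x\}\ni y\to x$.

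Next I establish an $L^1$-order-one differentiability. The Poincaré inequality for $\BV$ gives $\vint_{B(x,\rho)}|g-g_{B(x,\rho)}|\,d\mathcal L^n\le C\rho\,\tau(\rho)=o(\rho)$. Telescoping over the radii $\rho=r2^{-k}$, $k\ge0$, and using $g_{B(x,\rho)}\to g^*(x)=0$, yields $|g_{B(x,r)}|\le Cr\sum_{k\ge0}2^{-k}\tau(r2^{-k})=o(r)$; combining the two estimates, $\vint_{B(x,r)}|g|\,d\mathcal L^n=o(r)$, that is $\int_{B(x,r)}|g|\,d\mathcal L^n=o(r^{n+1})$.

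The heart of the argument is to show that for each $j\in\N$ the exceptional set $E_j:=\{y\ne x:\ |g^*(y)|>|y-x|/j\}$ is $1$-thin at $x$. I decompose over the dyadic annuli $A_k:=B(x,2^{-k})\setminus B(x,2^{-k-1})$; on $A_k$ one has $|y-x|\ge 2^{-k-1}$, hence $E_j\cap A_k\subset\{|g^*|\ge 2^{-k-1}/j\}\cap A_k$. Choosing $\varphi_k\in C_c^\infty(\R^n)$ with $\varphi_k\equiv1$ on $A_k$, $\supp\varphi_k\subset B(x,2^{-k+1})$ and $|\nabla\varphi_k|\le C2^k$, and noting $(\varphi_k g)^*=g^*$ on $A_k$, I apply the capacitary weak-type estimate $\capa_1(\{|v^*|\ge\lambda\})\le C(n)\lambda^{-1}|Dv|(\R^n)$, valid for $v\in\BV(\R^n)$ supported in a ball of radius $<1$ (it follows from the coarea formula, a boxing inequality for the relative $1$-capacity, and \eqref{eq:capa vs rcapa}), to $v=\varphi_k g$:
\[
\capa_1(E_j\cap A_k)\le\frac{Cj}{2^{-k-1}}\Big(|Dg|\big(B(x,2^{-k+1})\big)+C2^k\!\!\int_{B(x,2^{-k+1})}\!\!|g|\,d\mathcal L^n\Big)\le Cj\,\eta_k\,2^{-k(n-1)},
\]
where $\eta_k\to0$ because $|Dg|(B(x,2^{-k+1}))=o(2^{-kn})$ (from $\tau\to0$) and $\int_{B(x,2^{-k+1})}|g|\,d\mathcal L^n=o(2^{-k(n+1)})$ (previous step). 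Summing over $k\ge m-1$ when $2^{-m}<r\le2^{-m+1}$ gives $\capa_1(E_j\cap B(x,r))\le C'j\big(\sup_{k\ge m-1}\eta_k\big)r^{n-1}=o(r^{n-1})$, so $E_j$ is $1$-thin at $x$. This capacitary weak-type estimate is the one genuinely analytic ingredient, and I expect justifying it in exactly the form needed (with the correct small-scale normalisation, via the relative $1$-capacity) to be the main obstacle; everything else is measure-theoretic and fine-topological bookkeeping.

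Finally I assemble a single fine neighbourhood. As each $E_j$ is $1$-thin at $x$, I pick $r_j\downarrow0$ with $\capa_1(E_j\cap B(x,\rho))\le 2^{-j}\rho^{n-1}$ for all $0<\rho\le r_j$; then $A:=\bigcup_j\big(E_j\cap B(x,r_j)\big)$ is still $1$-thin at $x$ (split the union at a large index and use subadditivity of $\capa_1$). Since $x\notin A$ (indeed $x\notin E_j$ for every $j$), a standard property of the $1$-fine topology (see \cite{L-FC}) gives a $1$-finely open $U\ni x$ with $U\cap A\subset\{x\}$. For $y\in U\setminus\{x\}$ and any $j$ with $|y-x|<r_j$ we have $y\notin E_j$, i.e.\ $|g^*(y)|\le|y-x|/j$; hence $\limsup_{U\setminus\{x\}\ni y\to x}|g^*(y)|/|y-x|\le 1/j$ for every $j$, which is the required limit. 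Intersecting the $n$ component neighbourhoods gives the vector-valued statement.
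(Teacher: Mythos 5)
A preliminary remark on the comparison you were asked for: the paper does not prove Theorem \ref{thm:fine diff} at all --- it imports it from the preprint cited as \cite{L-FC} (Theorem 3.6 there) --- so there is no in-paper argument to measure yours against. Judged on its own, your outline follows the natural route: reduce to scalar components, linearize at a point that is a Lebesgue point of $f$ and of $\nabla f$ and where $|D^s f|(B(x,r))=o(r^n)$, get $\int_{B(x,r)}|g|\,d\mathcal L^n=o(r^{n+1})$ from Poincar\'e plus telescoping, and control the exceptional sets $E_j$ on dyadic annuli via a weak-type capacitary estimate. The weak-type estimate you isolate is indeed a known fact (coarea plus boxing inequality; it is the standard route to quasicontinuity of $\BV$ functions), so that step is a citation issue rather than a gap; you should, however, run it for $v^{\vee}$ and $v^{\wedge}$ and dispose separately of the $\capa_1$-negligible set $S_v\setminus J_v$, where $v^*$ is not controlled by them, take $\varphi_k\equiv 1$ on a neighborhood of the closed annulus so that $(\varphi_k g)^*=g^*$ there, and note that the convergence of $\sum_k 2^{-k(n-1)}$ uses $n\ge 2$, consistent with the paper's standing assumption.

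The one genuine gap is the final step, which you dismiss as ``a standard property of the $1$-fine topology.'' Knowing that $A$ is $1$-thin at $x$ and $x\notin A$ does not by itself yield a $1$-finely open $U\ni x$ with $U\cap A\subset\{x\}$: the natural candidate $(\R^n\setminus A)\cup\{x\}$ is in general not $1$-finely open, because $A$ need not be thin at the other points of its complement, and Definition \ref{def:1 fine topology} gives nothing more. What you are implicitly invoking is that the $1$-fine closure of $A$ is $A\cup b_1(A)$ (equivalently, the Choquet-property machinery for $p=1$); this is a nontrivial theorem, available in \cite{L-CK} rather than in the reference you point to. Alternatively, you can repair the step by hand inside your own construction: using the comparability of $\capa_1$ with the Hausdorff content $\mathcal H^{n-1}_\infty$ and the bound $\capa_1(\overline{B}(y,\rho))\le C\rho^{n-1}$ for $\rho\le 1$, cover each $E_j\cap A_k$ by a union of open balls contained in a slightly enlarged annulus whose closures still have total capacity at most $C\capa_1(E_j\cap A_k)$ plus an error small enough to preserve the $o(\rho^{n-1})$ bound; letting $V$ be the diagonal union of these ball families (only finitely many indices $j$ are active at each annular scale), $V$ is open, $\overline V$ is $1$-thin at $x$, and $U:=\{x\}\cup(\R^n\setminus\overline V)$ is $1$-finely open --- at $x$ by thinness, at every other point trivially since $\R^n\setminus\overline V$ is open --- and avoids each $E_j$ at scale $r_j$. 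With either of these insertions, together with the minor technical points above, your proof goes through.
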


Note that in general we understand $\nabla f$ to be the density of the
absolutely continuous part of $Df$, but at a.e. $x\in \R^n$ we can understand
$\nabla f(x)$ to be pointwise defined according to the above theorem.

\begin{example}\label{ex:W11 function}
	Let $\{q_j\}_{j=1}^{\infty}$ be an enumeration of points in $\R^n$ with rational coordinates.
	Let $f\in W_{\loc}^{1,n}(\R^n;\R^n)$ be such that the first component function is
	\[
	f_1(x):=\sum_{j=1}^{\infty} 2^{-j}\log \max\{-\log|x-q_j|,1\},\quad x\in \R^n.
	\]
	Now clearly $L_f(x,r)=\infty$ for every $x\in \R^n$ and $r>0$.
	Thus $H_f=\infty$ everywhere, while with a suitable choice of the other component functions,
	we can have that $\nabla f$ has full rank in a set of nonzero Lebesgue measure.
	Thus these quantities are unrelated, and we are motivated to define a relaxed version of $H_f$.
\end{example}

\begin{definition}
Let $f$ be a function in $\R^n$ taking values in $\R^n$ a.e.
For an arbitrary set $U\subset \R^n$ containing $x$ and $r>0$, we let
\[
L_{f,U}(x,r):=\sup\{|f(x)-f(z)|\colon |z-x|\le r,\, z\in U\}
\]
and
\[
l_{f,U}(x,r):=\inf\{|f(x)-f(z)|\colon |z-x|\ge r,\, z\in U\},
\]
and then 
\[
H_{f,U}(x,r):=\frac{L_{f,U}(x,r)}{l_{f,U}(x,r)}\quad\textrm{and}\quad
H_f^{\textrm{fine}}(x):=\inf_{U}\limsup_{r\to 0}H_{f,U}(x,r),
\]
where the infimum is taken over $1$-finely open sets $U$ containing $x$.
We interpret $H_{f,U}(x,r)$ to be $\infty$ if the denominator is zero
or the numerator is $\infty$.
If $f(x)$ or $f(z)$ is not in $\R^n$, we interpret $|f(x)-f(z)|$ to be $\infty$.
\end{definition}

The last sentence of the definition will be needed because the precise representative $f^*$
of a function $f\in L^1_{\loc}(\R^n;\R^n)$ may take values in $\R^n$ only a.e.; we use the same interpretation
with the previously defined quantities $h_f,H_f$.

We record the following ``Borel regularity''.

\begin{lemma}\label{lem:weak Borel regularity}
	Let $\mu$ be a positive Radon measure and
	let $D\subset \R^n$. Then there exists a Borel set $D^*\supset D$ such that
	\[
	\mu(D\cap Q)=\mu(D^*\cap Q)
	\]
	for all open cubes $Q\subset \R^{n}$ (of all sizes and orientations).
\end{lemma}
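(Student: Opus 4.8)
The quantity $\mu(D\cap Q)$ must be read as the outer measure $\mu^{*}(D\cap Q)$, since $D$ need not be $\mu$-measurable. The plan is to construct a \emph{Borel hull} of $D$: a Borel set $D^{*}\supseteq D$ satisfying the stronger property
\[
\mu(D^{*}\cap A)=\mu^{*}(D\cap A)\qquad\textrm{for every Borel set }A\subseteq\R^{n};
\]
since open cubes are Borel, taking $A=Q$ then yields the lemma. Throughout I use that a Radon measure $\mu$ on $\R^{n}$ is finite on bounded sets (hence $\sigma$-finite) and Borel regular, i.e. every $E\subseteq\R^{n}$ is contained in a Borel set $B$ with $\mu(B)=\mu^{*}(E)$; in particular $\mu^{*}(E)=\inf\{\mu(B)\colon B\textrm{ Borel},\ E\subseteq B\}$.

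\emph{Reduction to bounded pieces.} Write $\R^{n}=\bigsqcup_{k\ge 1}X_{k}$ with $X_{k}:=\{x\colon k-1\le|x|<k\}$; these are bounded Borel sets, so $\mu(X_{k})<\infty$. It suffices to produce, for each $k$, a Borel set $D_{k}^{*}$ with $D\cap X_{k}\subseteq D_{k}^{*}\subseteq X_{k}$ and $\mu(D_{k}^{*}\cap A)=\mu^{*}(D\cap X_{k}\cap A)$ for all Borel $A$: then $D^{*}:=\bigcup_{k}D_{k}^{*}$ is Borel, contains $D$, and since the $X_{k}$ are disjoint and $\mu^{*}$-measurable,
\[
\mu(D^{*}\cap A)=\sum_{k}\mu(D_{k}^{*}\cap A)=\sum_{k}\mu^{*}(D\cap X_{k}\cap A)=\mu^{*}(D\cap A)
\]
for every Borel $A$.

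\emph{Construction and verification on one piece.} Fix $k$. The family of Borel sets $B$ with $D\cap X_{k}\subseteq B\subseteq X_{k}$ is nonempty (it contains $X_{k}$) and each such $B$ has $\mu(B)\le\mu(X_{k})<\infty$; let $\alpha$ be the infimum of $\mu(B)$ over this family, and pick members $B_{j}$ with $\mu(B_{j})\to\alpha$, which after replacing $B_{j}$ by $B_{1}\cap\dots\cap B_{j}$ we may take decreasing. Set $D_{k}^{*}:=\bigcap_{j}B_{j}$, a Borel set with $D\cap X_{k}\subseteq D_{k}^{*}\subseteq X_{k}$ and $\mu(D_{k}^{*})=\alpha$. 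For a Borel set $A$, the inequality $\mu(D_{k}^{*}\cap A)\ge\mu^{*}(D\cap X_{k}\cap A)$ is immediate from $D\cap X_{k}\cap A\subseteq D_{k}^{*}\cap A$. If the reverse inequality failed, use Borel regularity to pick a Borel set $B\supseteq D\cap X_{k}\cap A$ with $\mu(B)<\mu(D_{k}^{*}\cap A)$; replacing $B$ by $B\cap X_{k}$ we may assume $B\subseteq X_{k}$. Then $E:=D_{k}^{*}\setminus(A\setminus B)=D_{k}^{*}\cap(A^{c}\cup B)$ is Borel, satisfies $D\cap X_{k}\subseteq E\subseteq X_{k}$ (if $x\in D\cap X_{k}$, then $x\in B$ when $x\in A$, and $x\in A^{c}$ otherwise), and
\[
\mu(E)=\mu(D_{k}^{*})-\mu\bigl(D_{k}^{*}\cap(A\setminus B)\bigr)\le\alpha-\bigl(\mu(D_{k}^{*}\cap A)-\mu(B)\bigr)<\alpha,
\]
contradicting the choice of $\alpha$. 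Hence $D_{k}^{*}$ has the required property, and the lemma follows.

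\emph{On the difficulty.} This is essentially routine measure theory. The points that require care are reading $\mu(D\cap Q)$ as an outer measure and staying within the Borel $\sigma$-algebra via Borel regularity, and the bookkeeping of the reduction to bounded pieces — in particular the identity $\mu^{*}(E)=\sum_{k}\mu^{*}(E\cap X_{k})$, which holds because each $X_{k}$ is $\mu^{*}$-measurable. The one genuine idea is the cut-and-paste step producing the contradiction.
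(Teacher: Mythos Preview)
Your proof is correct. The construction of a Borel hull $D^{*}\supseteq D$ with $\mu(D^{*}\cap A)=\mu^{*}(D\cap A)$ for every Borel $A$ is the standard measurable-hull argument, and the cut-and-paste step with $E=D_{k}^{*}\setminus(A\setminus B)$ is carried out cleanly; note in particular that $E\subseteq D_{k}^{*}\subseteq X_{k}$ so the minimality of $\alpha$ indeed applies. The paper itself gives no proof here and simply refers to \cite[Lemma 4.3]{L-CK}, so there is no in-paper argument to compare against; your approach is the natural one and in fact establishes more than the lemma asks (all Borel $A$, not just open cubes).
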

\begin{proof}
	See the proof of \cite[Lemma 4.3]{L-CK}.
\end{proof}

We have the following Morse's covering theorem, see. e.g. 
\cite[Theorem 1.144]{FoLe}.

\begin{theorem}\label{thm:Morse}
	Let $A\subset \R^n$ be a bounded set, let $\mu$ be a positive Radon measure on $\R^n$,
	let $\kappa\ge 1$, and let
	\[
	\mathcal D\subset \{x+K\colon x\in A,\,K\textrm{ convex, compact}\}
	\]
	such that for all $x\in A$,
	for arbitrarily small $r>0$ there exists $x+K\in \mathcal D$ with
	$B(x,r)\subset x+K\subset B(x,\kappa r)$. Then there exists a disjoint family
	$\mathcal D'\subset \mathcal D$ with
	\[
	\mu\left(A\setminus \bigcup_{D\in \mathcal D'}D\right)=0.
	\]
\end{theorem}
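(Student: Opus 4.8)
\section*{Proof proposal for Theorem \ref{thm:rank}}

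The plan is to split the statement into its three natural pieces, governed by the decomposition $Df=D^af+D^cf+D^jf$ in \eqref{eq:Daf Dcg Djf}, and to treat the ``full rank'' locus and its complement separately.

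\textbf{Step 1: The absolutely continuous part.} On $D^af=\nabla f\,\mathcal L^n$ the relevant statement is that for a.e.\ $x$ at which $\nabla f(x)$ has full rank we have $H_{f^*}^{\textrm{fine}}(x)<\infty$, and for a.e.\ $x$ at which $\nabla f(x)$ is rank-deficient we have $H_{f^*}^{\textrm{fine}}(x)=\infty$. The forward direction uses Theorem \ref{thm:fine diff}: at a.e.\ $x$ there is a $1$-finely open $U\ni x$ along which $f^*(y)=f^*(x)+\nabla f(x)(y-x)+o(|y-x|)$. If $\nabla f(x)$ is invertible, then along $U$ one gets $L_{f^*,U}(x,r)\le (|\nabla f(x)|+\eps)r$ and $l_{f^*,U}(x,r)\ge \tfrac12\|\nabla f(x)^{-1}\|^{-1}r$ for small $r$, whence $\limsup_r H_{f^*,U}(x,r)\le 2|\nabla f(x)|\,\|\nabla f(x)^{-1}\|<\infty$. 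For the converse, if $\nabla f(x)$ is rank-deficient pick a unit vector $v$ in its kernel; along the fine-differentiability set $U$ one has $|f^*(x+tv)-f^*(x)|=o(t)$ for a sequence of $t\downarrow 0$ with $x+tv\in U$, forcing $l_{f^*,U'}(x,r)=0$ for every $1$-finely open $U'\subset U$ and all small $r$ (since $x+tv\in U'$ for a sequence $t\to0$, by $1$-fine openness of $U'$ one can even keep such points; one should check $1$-fine openness does not remove a whole ray — this is where a density/thinness estimate is needed), so $H_{f^*,U'}(x,r)=\infty$. Hence $H_{f^*}^{\textrm{fine}}(x)=\infty$.

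\textbf{Step 2: The Cantor part.} On $D^cf$, which lives on $\R^n\setminus S_f$, Alberti's rank-one theorem says $\tfrac{dDf}{d|Df|}(x)=\eta(x)\otimes\nu(x)$ has rank one for $|D^cf|$-a.e.\ $x$, so the claim to prove is simply $H_{f^*}^{\textrm{fine}}(x)=\infty$ for $|D^cf|$-a.e.\ $x$. Using one-dimensional sections in the direction $\nu(x)$ (Section \ref{subsec:one dimensional sections}): blowing up, $f^*$ essentially depends on a single direction $\nu(x)$ to first order and is constant in the orthogonal directions, so along a $1$-finely open $U$ one finds, in directions orthogonal to $\nu(x)$, points $z\in U$ with $|z-x|$ not small but $|f^*(z)-f^*(x)|$ as small as we like (exploiting approximate continuity of $f^*$ off $S_f$ together with Lemma \ref{lem:rectifiable and cap}-type density of $1$-finely open sets), forcing $l_{f^*,U}(x,r)=0$; alternatively, one shows $L_{f^*,U}(x,r)/r\to\infty$ along $\nu(x)$ because the Cantor part contributes super-linear oscillation on $|D^cf|$-typical points. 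Either way $H_{f^*}^{\textrm{fine}}(x)=\infty$, consistent with rank one $<n$.

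\textbf{Step 3: The jump part.} On $J_f$ we use \eqref{eq:jump part representation} and \eqref{eq:jump point formula}. At $|D^jf|$-a.e.\ $x\in J_f$, the rank-one matrix $\tfrac{dDf}{d|Df|}(x)=\tfrac{f^+(x)-f^-(x)}{|f^+(x)-f^-(x)|}\otimes\nu(x)$ has rank one $<n$, so again we must show $H_{f^*}^{\textrm{fine}}(x)=\infty$. Here $f^*(x)=\tfrac{f^-(x)+f^+(x)}2$ while arbitrarily close to $x$ (on both sides of the jump hyperplane) $f^*$ takes values near $f^\pm(x)$; using \eqref{eq:two sided continuity} on sections and the fact that $1$-finely open neighborhoods of $x$ still meet $\mathcal L^{n-1}$-a.e.\ line through $x$ transverse to $\nu(x)$ in a set with $x$ as a limit point, one gets points $z\in U$ with $|z-x|\ge r$ but $|f^*(z)-f^*(x)|$ arbitrarily small (by moving along the jump hyperplane, where $f^*$ is approximately continuous with value $f^*(x)$), so $l_{f^*,U}(x,r)=0$. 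Thus $H_{f^*}^{\textrm{fine}}(x)=\infty$, again matching ``not full rank''.

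\textbf{Main obstacle.} The delicate point throughout Steps 1–3 is the interplay between the $1$-fine topology and the lower-dimensional sets along which $f^*$ is nicely controlled. Concretely: to run the ``$l_{f^*,U}(x,r)=0$'' argument one needs, for $|Df|$-a.e.\ $x$ in the rank-deficient locus, that \emph{every} $1$-finely open $U\ni x$ still contains a sequence $z_k\to x$ with $|z_k-x|$ bounded below and $f^*(z_k)\to f^*(x)$, i.e.\ $U$ cannot avoid the relevant $1$-Lipschitz graph / coordinate ray. This is exactly the content of Lemma \ref{lem:rectifiable and cap} and the thinness estimates in Lemma \ref{lem:capacity and Hausdorff measure 2} — but one must apply them at the right scale and in the right direction (the kernel direction of $\nabla f(x)$, resp.\ the jump normal $\nu(x)$), and combine them with the quasi-semicontinuity Theorem \ref{thm:quasisemicontinuity} to control $f^*$ off a small-capacity exceptional set. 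Making these density/continuity statements hold simultaneously for $|Df|$-a.e.\ $x$, across the three mutually singular pieces of $Df$, is where the real work lies; the full-rank direction in Step 1 is comparatively soft, relying only on Theorem \ref{thm:fine diff}.
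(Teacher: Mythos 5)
Your proposal does not address the statement at hand. The statement to be proved is Theorem \ref{thm:Morse}, the Morse covering theorem: given a bounded set $A$, a positive Radon measure $\mu$, and a family $\mathcal D$ of sets $x+K$ ($K$ convex, compact) that are $\kappa$-comparable to balls centered at points of $A$ at arbitrarily small scales, one must extract a \emph{pairwise disjoint} subfamily $\mathcal D'\subset\mathcal D$ covering $\mu$-almost all of $A$. What you have written is instead an outline of the main Theorem \ref{thm:rank}; nowhere is there any covering argument — no greedy or Vitali-type selection, no Besicovitch--Morse bounded-overlap estimate for convex bodies satisfying $B(x,r)\subset x+K\subset B(x,\kappa r)$ (which is exactly where convexity and the constant $\kappa$ must enter), and no exhaustion step showing that at each stage a fixed fraction of the $\mu$-measure of the still-uncovered part of $A$ is captured, so that iterating gives a disjoint family covering $\mu$-a.e.\ point. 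In the paper this result is not proved but quoted from \cite[Theorem 1.144]{FoLe}; if you want a self-contained proof, that covering machinery is what you must supply, and none of it appears in your text.

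Even taken on its own terms as a sketch of Theorem \ref{thm:rank}, the argument has gaps. In Step 1 your rank-deficient case rests on the claim that $l_{f^*,U}(x,r)=0$ because $|f^*(x+tv)-f^*(x)|=o(t)$ for $t\downarrow 0$ along a kernel direction $v$; but $l_{f^*,U}(x,r)$ is an infimum over $z\in U$ with $|z-x|\ge r$, so points $x+tv$ with $t$ small are simply not admissible competitors. The paper's argument is different: it chooses $z_j\in\partial B(x,r_j)\cap U$ whose direction approaches the kernel direction (possible because $\capa_1(\partial B(x,r_j)\setminus U)/r_j^{n-1}\to 0$), showing $l_{f^*,U}(x,r_j)/r_j\to 0$ while $L_{f^*,U}(x,r_j)/r_j$ stays bounded below by $|\nabla f_k(x)|$, so the ratio blows up. Likewise, the Cantor part is not the soft statement you describe; in the paper it is a long blow-up argument using strict convergence on one-dimensional sections, quasi-semicontinuity, and a measurability/selection step (Lemma \ref{lem:nonmeasurable}) which itself is proved \emph{using} Theorem \ref{thm:Morse} — so the covering theorem you were asked to prove cannot be bypassed by appealing to the structure of the main proof.
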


\begin{lemma}\label{lem:nonmeasurable}
	Let $\mu$ be a positive Radon measure and let $D\subset \R^n$ be a set, not necessarily 
	$\mu$-measurable. Let $\alpha>0$.
	Suppose that for $\mu$-a.e. $x\in \R^n$ we find a unit vector
	$\nu\in\R^n$, a sequence $r_{j}\searrow 0$, $j\in\N$, and
	$\mu$-measurable sets $A_j\subset Q_{\nu}(x,r_j)\setminus D$ such that
	$\mu(\partial Q_{\nu}(x,r_j))=0$ and
	\[
	\frac{\mu(A_j)}{\mu(Q_{\nu}(x,r_j))}\ge \alpha.
	\]
	Then $\mu(D)=0$.
\end{lemma}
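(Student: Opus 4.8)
The strategy is to pass to the outer measure $\mu^*$ and its measurable hull. Since $\mu$ is a positive Radon measure and $D$ need not be $\mu$-measurable, let $\mu^*(D\cap Q)$ denote the outer measure; equivalently, by Lemma \ref{lem:weak Borel regularity}, fix a Borel set $D^*\supset D$ with $\mu(D\cap Q)=\mu(D^*\cap Q)$ for every open cube $Q$ of every size and orientation. It suffices to show $\mu(D^*)=0$. Suppose for contradiction that $\mu(D^*)>0$; we may also assume $D^*$ is bounded (intersect with a large ball, noting the hypothesis is local). The idea is that the hypothesis says $D$ (hence $D^*$, in the sense of the cube identity) is \emph{everywhere} $\mu$-substantially missed by cubes $Q_\nu(x,r_j)$ on a definite fraction $\alpha$, yet a point of $\mu$-density one of $D^*$ cannot be so avoided.

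First I would invoke a density/differentiation result for the measure $\mu$: for $\mu$-a.e. $x\in D^*$,
\[
\lim_{j\to\infty}\frac{\mu(D^*\cap Q_\nu(x,r_j))}{\mu(Q_\nu(x,r_j))}=1
\]
along \emph{any} sequence of cubes $Q_\nu(x,r_j)$ shrinking to $x$ with $\mu(\partial Q_\nu(x,r_j))=0$. This is where I would use Theorem \ref{thm:Morse}: the family of such cubes forms a fine cover in the sense required (for each $x$ and arbitrarily small $r$ we can pick a cube between $B(x,r)$ and $B(x,\kappa r)$ with $\mu$-null boundary, since only countably many concentric dilates can have positive $\mu$-boundary), so the Besicovitch–Morse differentiation theorem for $\mu$ gives that $\mu$-a.e. point of $D^*$ is a point of density one with respect to this differentiation basis. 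The slight subtlety — that we must control density along a \emph{prescribed} sequence $r_j$, not along all radii — is handled because density one holds along every admissible sequence at a.e. point; this is exactly the content of the Morse-covering differentiation theorem applied to the measure $\mu\mres D^*$ and to $\mu$.

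Now combine the two facts. Fix $x$ in the full-$\mu$-measure subset of $D^*$ where both the density-one statement holds and the hypothesis of the lemma holds (recall $\mu(D^*)=\mu(D^*\cap D^*)=\mu(D\cap D^*)$, so $\mu$-a.e. point of $D^*$ may be taken in the "good" set coming from the hypothesis, applied with the cube identity of Lemma \ref{lem:weak Borel regularity}). At such $x$ we have a unit vector $\nu$, a sequence $r_j\searrow 0$ with $\mu(\partial Q_\nu(x,r_j))=0$, and $\mu$-measurable sets $A_j\subset Q_\nu(x,r_j)\setminus D$ with $\mu(A_j)/\mu(Q_\nu(x,r_j))\ge\alpha$. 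Since $A_j\cap D=\emptyset$ and $A_j$ is $\mu$-measurable, the cube identity gives $\mu(A_j\cap D^*)=\mu(A_j\cap D)=0$, hence $\mu(D^*\cap Q_\nu(x,r_j))\le \mu(Q_\nu(x,r_j))-\mu(A_j)\le (1-\alpha)\mu(Q_\nu(x,r_j))$. Dividing and letting $j\to\infty$ contradicts the density-one limit. Therefore $\mu(D^*)=0$, and since $\mu(D\cap Q)=\mu(D^*\cap Q)$ for all cubes $Q$ covering $\R^n$, we conclude $\mu(D)=0$ (as an outer measure, or: $D$ is contained in the $\mu$-null Borel set $D^*$).

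The main obstacle is the measurability bookkeeping: extracting a clean differentiation statement along a prescribed sequence of $\mu$-null-boundary cubes for the (possibly non-doubling) Radon measure $\mu$, and making sure the hypothesis — stated for the non-measurable set $D$ — transfers to $D^*$. Both are resolved by pairing Theorem \ref{thm:Morse} (which gives almost-everywhere differentiation with respect to a family of convex compact sets squeezed between concentric balls, and cubes with $\mu$-null boundary form such a family) with the cube-by-cube identity of Lemma \ref{lem:weak Borel regularity}; once these are in place the contradiction is immediate.
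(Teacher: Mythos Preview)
Your approach matches the paper's: pass to the Borel set $D^*$ from Lemma~\ref{lem:weak Borel regularity}, establish $\mu(D^*\cap Q_\nu(x,r_j))\le(1-\alpha)\,\mu(Q_\nu(x,r_j))$ for $\mu$-a.e.\ $x$, and conclude $\mu(D^*)=0$ via the Morse covering/differentiation theorem. One small point to tighten: the step ``the cube identity gives $\mu(A_j\cap D^*)=\mu(A_j\cap D)=0$'' does not follow directly from Lemma~\ref{lem:weak Borel regularity}, which only asserts the identity for open cubes, not for arbitrary $\mu$-measurable $A_j$; the paper sidesteps this by computing $\mu(D^*\cap Q)=\mu(D\cap Q)\le\mu(Q\setminus A_j)\le(1-\alpha)\,\mu(Q)$ directly from monotonicity of outer measure.
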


Note that the cube  $Q_{\nu}(x,r_j)$ is not unique but we can understand the notation to represent
a particular choice of such a cube for each $j$. 

\begin{proof}
	From Lemma \ref{lem:weak Borel regularity} we find 
	a Borel set $D^*\supset D$ such that
	\[
	\mu(D\cap Q)=\mu(D^*\cap Q)
	\]
	for all cubes $Q\subset \R^{n}$.
	For $\mu$-a.e. $x\in \R^n$, for all $j\in\N$ we have
	\begin{align*}
		\frac{\mu(Q_{\nu}(x,r_j)\cap D^*)}{\mu(Q_{\nu}(x,r_j))}
		&=\frac{\mu(Q_{\nu}(x,r_j)\cap D)}{\mu(Q_{\nu}(x,r_j))}\\
		&\le \frac{\mu(Q_{\nu}(x,r_j)\setminus A_j)}{\mu(Q_{\nu}(x,r_j))}\\
		&\le 1-\alpha.
	\end{align*}
	From Theorem \ref{thm:Morse} combined with the fact that $\mu(\partial Q_{\nu}(x,r_j))=0$,
	it follows that
	$\mu(D^*)=0$ and then also $\mu(D)=0$.
\end{proof}

The next two lemmas concern strict and uniform convergence of BV functions.

\begin{lemma}\label{lem:strict and uniform conv}
	Let $\Om\subset \R$ be an open interval. Let $f\in \BV(\Om)$
	with $|D^j f|(\Om)=0$ and suppose $\{f_i\}_{i=1}^{\infty}$ is a sequence
	converging to $f$  strictly in $\BV(\Om)$,
	and also $f_i^*(x)\to f^*(x)$ for $\mathcal L^1$-a.e. $x\in \Om$. Let $K\subset \Om$ be a compact interval.
	Then $f_i^*$ converges to $f^*$ uniformly in $K$.
\end{lemma}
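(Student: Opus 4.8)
The plan is to reduce uniform convergence to a combination of pointwise convergence at a dense set plus equicontinuity estimates coming from the total variation. First I would observe that since $|D^jf|(\Om)=0$, the function $f^*$ is continuous on $\Om$: indeed for a BV function on an interval with no jump part, $f^\wedge=f^\vee=f^*$ everywhere, and by \eqref{eq:fundamental theorem of calculus for BV 2} we have $|f^*(x)-f^*(\tilde x)|\le |Df|([x,\tilde x])$, which tends to $0$ as $\tilde x\to x$ because $|Df|$ is a finite measure with no atoms. In particular $f^*$ is uniformly continuous on the compact interval $K$; fix a modulus of continuity, or more precisely note that for any $\eps>0$ there is $\delta>0$ with $|Df|([x,\tilde x])<\eps$ whenever $x,\tilde x\in K'$, $|x-\tilde x|<\delta$, where $K'\supset K$ is a slightly larger compact interval inside $\Om$.

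Next I would set up the key estimate for the $f_i$'s. For each $i$ and for $x,\tilde x\in\Om$ (points where things are controlled), \eqref{eq:fundamental theorem of calculus for BV 2} gives $|f_i^*(x)-f_i^*(\tilde x)|\le |Df_i|([x,\tilde x])$. The issue is that $|Df_i|$ need not have the same small mass on small intervals uniformly in $i$ — but strict convergence $|Df_i|(\Om)\to|Df|(\Om)$ together with $f_i\to f$ in $L^1$ forces $|Df_i|\to|Df|$ weakly* as measures (this is standard: lower semicontinuity of total variation on open subsets plus convergence of total masses yields weak* convergence). Hence for the endpoints $a',b'$ of $K'$ chosen so that $|Df|(\{a'\})=|Df|(\{b'\})=0$ (possible since $|Df|$ has at most countably many atoms, in fact none here), we get $|Df_i|([a',b'])\to |Df|([a',b'])$, and more usefully, covering $K$ by finitely many closed subintervals $I_1,\dots,I_N$ of $K'$ each of length $<\delta$ and with endpoints that are not atoms of $|Df|$, weak* convergence gives $|Df_i|(I_m)\to|Df|(I_m)<\eps$, so $|Df_i|(I_m)<\eps$ for all $m$ once $i$ is large. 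This yields a uniform-in-$i$ (for large $i$) equicontinuity: $|f_i^*(x)-f_i^*(\tilde x)|<\eps$ whenever $x,\tilde x$ lie in a common $I_m$.

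Finally I would combine this with the a.e.\ pointwise convergence $f_i^*\to f^*$. Pick in each $I_m$ a point $p_m$ at which $f_i^*(p_m)\to f^*(p_m)$ (the a.e.\ convergence set is dense, so such $p_m$ exist), and choose $i$ large enough that additionally $|f_i^*(p_m)-f^*(p_m)|<\eps$ for all $m=1,\dots,N$ and $|Df_i|(I_m)<\eps$ for all $m$. Then for an arbitrary $x\in K$, picking $m$ with $x\in I_m$,
\[
|f_i^*(x)-f^*(x)|\le |f_i^*(x)-f_i^*(p_m)|+|f_i^*(p_m)-f^*(p_m)|+|f^*(p_m)-f^*(x)|<3\eps,
\]
using the equicontinuity of $f_i^*$, the pointwise estimate at $p_m$, and the (uniform) continuity of $f^*$. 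Since $\eps>0$ is arbitrary, $f_i^*\to f^*$ uniformly on $K$. The main obstacle is the transfer from strict convergence to the uniform-in-$i$ smallness of $|Df_i|$ on small intervals; the resolution is exactly the passage to weak* convergence of the variation measures and the choice of partition points avoiding the (here absent) atoms of the limit measure, after which the $\eps/3$ argument is routine.
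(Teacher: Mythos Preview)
Your proof is correct and follows essentially the same route as the paper's: partition $K$ into finitely many small intervals on which $|Df|$ has mass $<\eps$, use strict convergence to transfer this smallness to $|Df_i|$ for large $i$, pick in each interval an anchor point of a.e.\ pointwise convergence, and conclude by the $3\eps$ triangle inequality via \eqref{eq:fundamental theorem of calculus for BV 2}. The only cosmetic difference is that you phrase the transfer step as weak* convergence of $|Df_i|$ to $|Df|$, whereas the paper argues directly that $\limsup_i|Df_i|(H)\le|Df|(H)$ for compact $H\subset\Om$ from lower semicontinuity on open sets plus convergence of total masses.
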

\begin{proof}
	Fix $\eps>0$. Since $f_i^*(x)\to f^*(x)$ for a.e. $x\in \Om$
	and $|Df|$ does not charge singletons, we find a finite collection of points $x_1<\ldots<x_N$
	such that $K\subset (x_1,x_N)$, $f_i^*(x_j)\to f^*(x_j)$ for every $j=1,\ldots,N$, and
	\[
	|Df|([x_j,x_{j+1}])<\eps\quad \textrm{for all }j=1,\ldots,N-1.
	\]
	Thus for sufficiently large $i$, we have $|f_i^*(x_j)-f^*(x_j)|<\eps$ for every $j=1,\ldots,N$.
	Since $|Df|(W)\le \liminf_{i\to\infty}|Df_i|(W)$ for every open set $W\subset \Om$
	and
	\[
	|Df|(\Om)= \lim_{i\to\infty}|Df_i|(\Om), 
	\]
	we necessarily have
	\[
	|Df|(H)\ge \limsup_{i\to\infty}|Df_i|(H)
	\]
	for every compact $H\subset \Om$.
	Thus for sufficiently large $i$, we have also
	\[
	|Df_i|([x_j,x_{j+1}])<\eps\quad \textrm{for all }j=1,\ldots,N-1.
	\]
	Let $j\in \{1,\ldots,N-1\}$.
	Using \eqref{eq:fundamental theorem of calculus for BV 2},
	for all $x\in [x_j,x_{j+1}]$ we have
	\begin{align*}
	|f_i^*(x)-f^*(x)|
	&\le |f_i^*(x)-f_i^*(x_j)|+|f_i^*(x_j)-f^*(x_j)|+|f^*(x_j)-f^*(x)|\\
	&\le |Df_i|([x_j,x_{j+1}])+|f_i^*(x_j)-f^*(x_j)|+|Df|([x_j,x_{j+1}])\\
	&<\eps+\eps+\eps
	\end{align*}
for all sufficiently large $i$. In total, $|f_i^*(x)-f^*(x)|<3\eps$ for all $x\in K$, for sufficiently large $i$.
\end{proof}

\begin{lemma}\label{lem:strict and uniform conv 2}
	Let $\Om=(a,b)$ be an open interval. Let $f\in \BV(\Om)$ be increasing
	and suppose $\{f_i\}_{i=1}^{\infty}$ is a sequence
	converging to $f$ strictly in $\BV(\Om)$, and $f_i^*(x)\to f^*(x)$ for a.e. $x\in \Om$.
	Then for a.e. $t\in\Om$,
	we have
	\[
	\liminf_{i\to\infty}\inf_{s\in \Om\colon s\ge t}f^*_i(s)\ge f^*(t).
	\]
\end{lemma}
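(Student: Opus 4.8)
The plan is to use the Jordan decomposition of $Df_i$ to bound $f_i^*(s)$ from below, for $s\ge t$, by $f_i^*(t)$ minus the total mass of the negative part of $Df_i$ on $[t,b)$, and then to show that this negative mass tends to $0$ for $\mathcal L^1$-a.e.\ $t$.

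First I would fix $t\in\Om=(a,b)$ with $t\notin S_f$ --- this excludes only an at most countable, hence $\mathcal L^1$-negligible, set, since $J_f=S_f$ is at most countable in dimension one --- and with $f_i^*(t)\to f^*(t)$; $\mathcal L^1$-a.e.\ $t\in\Om$ has both properties. Writing the Jordan decomposition $Df_i=Df_i^+-Df_i^-$ into mutually singular finite positive measures, I would write $f_i=p_i-q_i$ up to an additive constant, with $p_i,q_i$ nondecreasing functions satisfying $Dp_i=Df_i^+$ and $Dq_i=Df_i^-$. Then $p_i^*$ and $q_i^*$ are nondecreasing, $f_i^*-(p_i^*-q_i^*)$ is constant, and by the one-dimensional estimate \eqref{eq:fundamental theorem of calculus for BV 2} applied to $q_i$ on the interval $\Om$, for all $s\in\Om$ with $s\ge t$ we have $0\le q_i^*(s)-q_i^*(t)\le |Dq_i|([t,s])=Df_i^-([t,s])\le Df_i^-([t,b))$. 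Hence, for such $s$,
\[
f_i^*(s)-f_i^*(t)=\bigl(p_i^*(s)-p_i^*(t)\bigr)-\bigl(q_i^*(s)-q_i^*(t)\bigr)\ge -Df_i^-([t,b)),
\]
so that $\inf_{s\in\Om,\,s\ge t}f_i^*(s)\ge f_i^*(t)-Df_i^-([t,b))$.

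Next I would prove $Df_i^-([t,b))\to0$. Write $Df_i^-([t,b))=|Df_i|([t,b))-Df_i^+([t,b))$; all these masses are finite since $f_i\in\BV(\Om)$. For the first term, $|Df_i|([t,b))=|Df_i|(\Om)-|Df_i|((a,t))$, so using $|Df_i|(\Om)\to|Df|(\Om)$ (strict convergence) together with the lower semicontinuity $\liminf_i|Df_i|((a,t))\ge|Df|((a,t))$ on the open set $(a,t)$ (as used in the proof of Lemma~\ref{lem:strict and uniform conv}) I obtain $\limsup_i|Df_i|([t,b))\le|Df|(\Om)-|Df|((a,t))=|Df|([t,b))$. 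For the second term, fix $t<t'<d'<b$ and choose $\varphi\in C_c((t,b))$ with $0\le\varphi\le1$ and $\varphi\equiv1$ on $[t',d']$; since $Df\ge0$ (because $f$ is increasing) and $f_i\to f$ in $L^1$ forces $\int\varphi\,dDf_i\to\int\varphi\,dDf$,
\[
Df_i^+([t,b))\ge\int_{(t,b)}\varphi\,dDf_i^+\ge\int_{(t,b)}\varphi\,dDf_i\longrightarrow\int_{\Om}\varphi\,dDf\ge Df([t',d']),
\]
so $\liminf_i Df_i^+([t,b))\ge Df([t',d'])$, and letting $t'\searrow t$, $d'\nearrow b$ gives $\liminf_i Df_i^+([t,b))\ge Df((t,b))$. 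Combining the two bounds (and using $|Df|=Df$ since $f$ is increasing),
\[
\limsup_{i\to\infty}Df_i^-([t,b))\le|Df|([t,b))-Df((t,b))=Df(\{t\})=0,
\]
because $t\notin S_f$ forces $|Df|(\{t\})=0$. Since $Df_i^-([t,b))\ge0$, this gives $Df_i^-([t,b))\to0$, and with $f_i^*(t)\to f^*(t)$ the inequality from the previous step yields $\liminf_i\inf_{s\ge t}f_i^*(s)\ge f^*(t)$.

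I expect the main obstacle to be the control of $Df_i^-([t,b))$: since $[t,b)$ is neither open nor compact, the semicontinuity properties of $|Df_i|$ under strict convergence do not apply to it directly. The resolution is to split off the positive part --- $Df_i^+([t,b))$ has the clean weak-$*$ lower bound above precisely because $Df\ge0$ --- and to reduce the upper bound for $|Df_i|([t,b))$ to the genuinely open set $(a,t)$ by complementation within $\Om$; together these pin $\limsup_i Df_i^-([t,b))$ down to the atom $Df(\{t\})$, which vanishes for $t\notin S_f$.
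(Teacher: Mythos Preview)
Your proof is correct and follows the same strategy as the paper: Jordan-decompose $Df_i=(Df_i)_+-(Df_i)_-$ and use $\inf_{s\ge t}f_i^*(s)\ge f_i^*(t)-(Df_i)_-([t,b))$. The only difference is that the paper observes directly that strict convergence to an increasing $f$ forces $(Df_i)_-(\Om)\to 0$ \emph{globally} (your own test-function argument, run on $\Om$ rather than on $[t,b)$, gives exactly this), so $(Df_i)_-([t,b))\le(Df_i)_-(\Om)\to 0$ without any need to localize or to exclude the atoms $t\in S_f$.
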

\begin{proof}
	The measures $Df_i$ can be decomposed into their positive and negative parts as
	$Df_i=(Df_i)_+-(Df_i)_-$.
	Due to the strict convergence, we have
	\[
	\lim_{i\to\infty}(Df_i)_-(\Om) = 0.
	\]
	For a.e. $t\in\Om$, we have
	$\lim_{i\to\infty}f_i^*(t)=f^*(t)$. Fix such $t$. 	
	Then by \eqref{eq:fundamental theorem of calculus for BV 2},
	\begin{align*}
		\liminf_{i\to\infty}\inf_{s\ge t}f^*_i(s)
		&\ge \liminf_{i\to\infty}f^*_i(t)
		-\limsup_{i\to\infty}(Df_i)_-([t,b))\\
		&= f^*(t).
	\end{align*}
\end{proof}

We will also need the following measurability results.

\begin{lemma}\label{lem:measurability}
	Let $f\in\BV(\R^n)$.
	Let $P\subset \R^{n-1}$ be a $\mathcal L^{n-1}$-measurable set and let
	$h_1\le h_2$ be Borel functions on $\R^{n-1}$. Then
	\[
	\bigcup_{z\in P}\{z\}\times [h_1(z),h_2(z)]
	\]
	is $\mathcal L^{n-1}\otimes |Df_z|=|D_nf|$-measurable.
\end{lemma}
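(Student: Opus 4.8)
The plan is to show directly that the set $G:=\bigcup_{z\in P}\{z\}\times[h_1(z),h_2(z)]$ belongs to the product $\sigma$-algebra $\mathcal B_{\mathcal L^{n-1}}(\R^{n-1})\times\mathcal B(\R)$, which is the $\sigma$-algebra carrying the generalized product measure of \eqref{eq:product measure}. First I would note that this product measure is available in the present situation: since $f\in\BV(\R^n)$, the slicing identity \eqref{eq:slice representation for total variation} gives $\int_{\R^{n-1}}|Df_z|(\R)\,d\mathcal L^{n-1}(z)=|D_nf|(\R^n)<\infty$, so $\mathcal L^{n-1}\otimes|Df_z|$ is well-defined and, again by \eqref{eq:slice representation for total variation}, coincides with the Radon measure $|D_nf|$; hence membership in $\mathcal B_{\mathcal L^{n-1}}(\R^{n-1})\times\mathcal B(\R)$ will imply $|D_nf|$-measurability.

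The main step is the set identity
\[
G=\bigl(P\times\R\bigr)\cap\{(z,t)\in\R^n\colon h_1(z)\le t\}\cap\{(z,t)\in\R^n\colon t\le h_2(z)\},
\]
which holds because $h_1\le h_2$ everywhere, so that for each $z\in P$ the fibre of $G$ over $z$ is exactly the interval $[h_1(z),h_2(z)]$. Now $P\times\R$ is a measurable rectangle with $P\in\mathcal B_{\mathcal L^{n-1}}(\R^{n-1})$ and $\R\in\mathcal B(\R)$, so it lies in $\mathcal B_{\mathcal L^{n-1}}(\R^{n-1})\times\mathcal B(\R)$. For the other two sets I would use that the projection $\pi_n\colon\R^n\to\R^{n-1}$ is continuous and $h_1,h_2$ are Borel, so that $(z,t)\mapsto h_1(z)-t$ and $(z,t)\mapsto t-h_2(z)$ are Borel functions on $\R^n$ and their sublevel sets $\{h_1(z)\le t\}$ and $\{t\le h_2(z)\}$ lie in $\mathcal B(\R^n)$. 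Since $\R^{n-1}$ and $\R$ are second countable, $\mathcal B(\R^n)=\mathcal B(\R^{n-1})\times\mathcal B(\R)\subset\mathcal B_{\mathcal L^{n-1}}(\R^{n-1})\times\mathcal B(\R)$, so these two sets belong to the product $\sigma$-algebra as well. Being a finite intersection of its members, $G$ does too, which proves the lemma.

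I do not expect any genuine obstacle here; the only points requiring care are keeping track of exactly which $\sigma$-algebra the product measure lives on (and checking, via $f\in\BV(\R^n)$, that it exists at all) and the standard fact that the Borel $\sigma$-algebra of a product of second-countable spaces equals the product of the Borel $\sigma$-algebras. If one instead reads ``$|D_nf|$-measurable'' as ``lying in the $|D_nf|$-completion of $\mathcal B(\R^n)$'', one can additionally choose a Borel set $P'\subset P$ with $\mathcal L^{n-1}(P\setminus P')=0$; then $G$ differs from the analogous Borel set built over $P'$ only inside $(P\setminus P')\times\R$, which has $|D_nf|$-measure $\int_{P\setminus P'}|Df_z|(\R)\,d\mathcal L^{n-1}(z)=0$.
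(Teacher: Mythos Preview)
Your proof is correct and in the same spirit as the paper's, with a small presentational difference. The paper first replaces $P$ by a Borel superset $P'$ with $\mathcal L^{n-1}(P'\setminus P)=0$, observes that the corresponding set over $P'$ is Borel (this is exactly your decomposition $(P'\times\R)\cap\{h_1\le t\}\cap\{t\le h_2\}$, cf.\ the argument you cite for the sublevel sets), and then notes that the difference is contained in the $|D_nf|$-null set $(P'\setminus P)\times\R$; you instead work directly in $\mathcal B_{\mathcal L^{n-1}}(\R^{n-1})\times\mathcal B(\R)$, which is slightly cleaner since that is precisely the $\sigma$-algebra on which the generalized product measure \eqref{eq:product measure} is defined. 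Your closing remark about the alternative reading via a Borel $P'\subset P$ is the mirror image of the paper's choice $P'\supset P$; either direction works.
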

\begin{proof}
There is a Borel set $P'\supset P$ such that $\mathcal L^{n-1}(P'\setminus P)=0$.
The set
\[
\bigcup_{z\in P'}\{z\}\times [h_1(z),h_2(z)]
\]
can be seen to be a Borel set e.g. as in the proof of  \cite[Lemma 2.3]{EvGa},
and the difference between the sets is contained in
\[
\bigcup_{z\in P'\setminus P}\{z\}\times (-\infty,\infty)
\]
which is of zero $\mathcal L^{n-1}\otimes |Df_z|$-measure.
\end{proof}

Recall that for a BV function $f$ on the real line,
we call $f^{\vee}(x)-f^{\wedge}(x)=|Df|(\{x\})$ the jump (size) of $f$ at point $x$.

\begin{lemma}\label{lem:biggest jump}
	Let $\Om\subset \R^n$ be open, let $k\in\N$, and let $f\in \BV(\Om;\R^k)$. 
	For every $z\in \pi_n(\Om)$ such that
	$f_z\in \BV(\Om_z;\R^k)$ (which holds for $\mathcal L^{n-1}$-a.e. $z\in  \pi_n(\Om)$),
	let $\alpha(z)$ be the size of the maximal jump.
	Then the function $\alpha$ is  $\mathcal L^{n-1}$-measurable.
\end{lemma}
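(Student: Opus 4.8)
The plan is to realise the maximal jump size of the slice $f_z$ as a countable infimum of countable suprema of masses of a one-dimensional Radon measure depending $\mathcal L^{n-1}$-measurably on $z$. For $\mathcal L^{n-1}$-a.e.\ $z\in\pi_n(\Om)$ we have $f_z\in\BV(\Om_z;\R^k)$, and then $|D^jf_z|$, viewed as a finite positive Radon measure on $\R$ (extended by $0$ outside $\Om_z$), is purely atomic, its atoms being exactly the points of $J_{f_z}$ with $|D^jf_z|(\{t\})=|(f_z)^+(t)-(f_z)^-(t)|$ equal to the jump size of $f_z$ at $t$; thus $\alpha(z)=\sup_{t\in\R}|D^jf_z|(\{t\})$ for $\mathcal L^{n-1}$-a.e.\ $z$. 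Hence it suffices to establish two things: first, the elementary fact that for every finite positive Radon measure $\mu$ on $\R$, with the convention $\sup\emptyset:=0$,
\[
\sup_{t\in\R}\mu(\{t\})=\inf_{n\in\N}\ \sup_{j\in\Z}\ \mu([j2^{-n},(j+1)2^{-n}]);
\]
and second, that $z\mapsto|D^jf_z|(I)$ is $\mathcal L^{n-1}$-measurable for each fixed interval $I\subset\R$.

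For the displayed identity the inequality ``$\le$'' is immediate: for every $n$ each point $t$ lies in some dyadic interval of generation $n$, whose $\mu$-mass is at least $\mu(\{t\})$. For ``$\ge$'', suppose the right-hand side exceeds $s:=\sup_{t}\mu(\{t\})$ by some $\eps>0$; then for every $n\in\N$ there is a dyadic interval $I_n$ of length $2^{-n}$ with $\mu(I_n)>s+\eps$. Since $\mu$ is finite, $\mu(\R\setminus[-R,R])\le\eps$ for $R$ large; as $s\ge 0$, this forces the $I_n$ with $n$ large to meet $[-R,R]$, so their left endpoints stay bounded and, along a subsequence, converge to some $x_\infty\in\R$. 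Then $I_n\subset B(x_\infty,\rho_n)$ with $\rho_n\to 0$, and passing to a further subsequence along which $\rho_n$ decreases, continuity from above of the finite measure $\mu$ gives $\mu(\{x_\infty\})=\lim_n\mu(B(x_\infty,\rho_n))\ge s+\eps$, contradicting the definition of $s$. (Consecutive closed dyadic intervals overlap at an endpoint, but this only helps ``$\le$'' and is harmless here.) Combining the two inequalities with the reduction above yields $\alpha(z)=\inf_{n\in\N}\sup_{j\in\Z}|D^jf_z|([j2^{-n},(j+1)2^{-n}])$ for $\mathcal L^{n-1}$-a.e.\ $z$.

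The $\mathcal L^{n-1}$-measurability of $z\mapsto|D^jf_z|(I)$ for a fixed interval $I\subset\R$ is part of the slicing machinery recalled in Section~\ref{subsec:one dimensional sections}: by \eqref{eq:slice representation for total variation} we have $|D^j_nf|=\mathcal L^{n-1}\otimes|D^jf_z|$, and for this generalized product measure to be defined the parametrized measure $z\mapsto|D^jf_z|$ must be weakly* $\mathcal L^{n-1}$-measurable (see Section~\ref{sec:Radon} and \cite[Corollary 2.29]{AFP}), which by definition means precisely that $z\mapsto|D^jf_z|(B)$ is $\mathcal L^{n-1}$-measurable for every Borel set $B\subset\R$. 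Since a countable supremum, and then a countable infimum, of $\mathcal L^{n-1}$-measurable functions is again $\mathcal L^{n-1}$-measurable, the function $\alpha$ is $\mathcal L^{n-1}$-measurable.

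The only genuinely non-routine point is the ``$\ge$'' direction of the elementary identity — that the largest atomic mass of a finite measure on $\R$ is recovered as the limit, over shrinking dyadic scales, of the largest mass carried by a dyadic interval — which is settled by the boundedness and continuity-from-above argument above; the remaining steps are routine bookkeeping with the one-dimensional sections of $\BV$ functions.
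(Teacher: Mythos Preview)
Your proof is correct and follows essentially the same route as the paper: express $\alpha(z)$ as $\inf_{n}\sup_{j}$ of the masses that the slice measure assigns to closed dyadic intervals, and then invoke the weak* $\mathcal L^{n-1}$-measurability of the parametrized slice measures. The only cosmetic difference is that the paper works with $|Df_z|$ rather than $|D^jf_z|$ (which is harmless since they have the same atoms) and simply asserts the dyadic identity as $\alpha(z)=\lim_{k\to\infty}\max_{i\in\Z}|Df_z|([i2^{-k},(i+1)2^{-k}]\cap\Om_z)$ without the compactness/continuity-from-above justification you supply; your added detail for the ``$\ge$'' direction is a welcome clarification rather than a departure.
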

\begin{proof}
	Note that $|Df_z|$ is weakly* $\mathcal L^{n-1}$-measurable;
	recall Section \ref{sec:Radon}.
	We have
	\[
	\alpha(z)= \lim_{k\to\infty}\max_{i\in\Z} |Df_z|([i2^{-k},(i+1)2^{-k}]\cap \Om_z)
	\]
	for a.e. $z\in \pi_n(\Om)$.
\end{proof}

\section{Proof of the main theorem}

In this section we prove our main theorem.

\begin{proof}[Proof of Theorem \ref{thm:rank}]
	Recall the decomposition $Df=D^a f+ D^c f+ D^j f$ from before \eqref{eq:Daf Dcg Djf}.
	We will show that the claim holds for
	$|D^a f|$-a.e.,  $|D^c f|$-a.e., and $|D^f f|$-a.e. $x\in \R^n$.\\
	
	\textbf{The absolutely continuous part.}
	We prove the claim for $|D^a f|$-a.e. $x\in\R^n$.
	Suppose $\tfrac{dDf}{d|Df|} (x)$ is of full rank.
	We need to show that $H_{f^*}^{\textrm{fine}}(x)<\infty$.
	Excluding a set of $\mathcal L^n$-measure zero, which is also a set of $|D^a f|$-measure zero,
	we have $\tfrac{dDf}{d|Df|} (x)=\frac{\nabla f(x)}{|\nabla f(x)|}$
	and the fine differentiability of Theorem \ref{thm:fine diff} holds at $x$.
	Consider the quantities
	\[
	\Vert \nabla f\Vert_{\textrm{max}}:=\max_{|v|=1}|\nabla f(x) v|
	\quad\textrm{and}\quad
	\Vert \nabla f\Vert_{\textrm{min}}:=\min_{|v|=1}|\nabla f(x) v|,
	\]
	which are both in $(0,\infty)$ since $\nabla f(x)$ has full rank.
	By Theorem \ref{thm:fine diff} we find a $1$-finely open set $V$ containing $x$ such that
	\[
	\lim_{V\setminus \{x\}\ni y\to x}\frac{|f^*(y)-f^*(x)- \nabla f(x) (y-x)|}{|y-x|}=0.
	\]
	Now letting $U:=V\cap B(x,r)$ for a sufficiently small $r>0$, we still have
	\begin{equation}\label{eq:fine diff assumption}
	\lim_{U\setminus \{x\}\ni y\to x}\frac{|f^*(y)-f^*(x)-\nabla f(x)(y-x)|}{|y-x|}=0,
	\end{equation}
	and also
	\begin{equation}\label{eq:fine diff assumption V U}
	\frac{|f^*(y)-f^*(x)-\nabla f(x)(y-x)|}{|y-x|}<\frac{\Vert \nabla f\Vert_{\textrm{min}}}{2}
	\quad\textrm{for all }y\in U\setminus \{x\}.
	\end{equation}
	Choose an arbitrary sequence $r_j\searrow 0$.
	For every $j\in\N$, also
	consider $y_j\in U\cap \overline{B}(x,r_j)$. Then we have
	\[
	\limsup_{j\to\infty}\frac{|f^*(y_j)-f^*(x)|}{r_j}
	\le \Vert \nabla f\Vert_{\textrm{max}}\quad\textrm{by }\eqref{eq:fine diff assumption}.
	\]
	Thus
	\begin{align*}
	\limsup_{j\to\infty}\frac{L_{f^*,U}(x,r_j)}{r_j}
	=\limsup_{j\to\infty}\frac{\sup\{|f^*(z)-f^*(x)|\colon |z-x|\le r_j,\, z\in U\}}{r_j}
	\le \Vert \nabla f\Vert_{\textrm{max}}.
	\end{align*}
	On the other hand, for every $j\in\N$ consider
	$\widetilde{z}_j\in U$ with $|\widetilde{z}_j-x|\ge r_j$ (if it exists,
	as it does for all sufficiently large $j$). We have
	\begin{align*}
	\liminf_{j\to\infty}\frac{|f^*(\widetilde{z}_j)-f^*(x)|}{r_j}
	&= \liminf_{j\to\infty}\frac{|f^*(\widetilde{z}_j)-f^*(x)|}{|\widetilde{z}_j-x|} \frac{|\widetilde{z}_j-x|}{r_j}
	\quad\textrm{by }\eqref{eq:fine diff assumption}\\
	&\ge \liminf_{j\to\infty}\frac{|f^*(\widetilde{z}_j)-f^*(x)|}{|\widetilde{z}_j-x|}
	\\
	&\ge \frac{\Vert \nabla f\Vert_{\textrm{min}}}{2}
	\quad\textrm{by }\eqref{eq:fine diff assumption V U}.
	\end{align*}
	Thus
	\begin{align*}
	\liminf_{j\to\infty}\frac{l_{f^*,U}(x,r_j)}{r_j}
	=\liminf_{j\to\infty}\frac{\inf\{|f^*(z)-f^*(x)|\colon |z-x|\ge r_j,\, z\in U\}}{r_j}
	\ge \frac{\Vert \nabla f\Vert_{\textrm{min}}}{2}.
	\end{align*}
	In total, we obtain
	\[
	\limsup_{j\to\infty}H_{f^*,U}(x,r_j)
	= \limsup_{j\to\infty}\frac{L_{f^*,U}(x,r_j)}{l_{f^*,U}(x,r_j)}
	\le 2\frac{\Vert \nabla f\Vert_{\textrm{max}}}{\Vert \nabla f\Vert_{\textrm{min}}}.
	\]
	We conclude
	\[
	H_{f^*}^{\textrm{fine}}(x)
	\le 2\frac{\Vert \nabla f\Vert_{\textrm{max}}}{\Vert \nabla f\Vert_{\textrm{min}}}
	<\infty.
	\]
	
	Then suppose $\tfrac{dDf}{d|Df|} (x)$ is not of full rank.
	We need to show that $H_{f^*}^{\textrm{fine}}(x)=\infty$.
	Fix an arbitrary $1$-finely open set $U$ containing $x$.
	Again excluding a set of $\mathcal L^n$-measure zero, which is also a set of $|D^a f|$-measure zero,
	we have $\tfrac{dDf}{d|Df|} (x)=\frac{\nabla f(x)}{|\nabla f(x)|}$ and
	the fine differentiability of Theorem \ref{thm:fine diff} holds at $x$.
	This means that by making the $1$-finely open set
	$U$ smaller, which only decreases $H_{f,U}(x,r)$, we have that
	\begin{equation}\label{eq:fine diff assumption 2}
	\lim_{U\setminus \{x\}\ni y\to x}\frac{|f^*(y)-f^*(x)-\nabla f(x)(y-x)|}{|y-x|}=0.
	\end{equation}
	But now the $\nabla f_j(x)$'s do not span $\R^n$; we can assume that they do not span $e_n$.
	By Lemma \ref{lem:capacity and Hausdorff measure}, we have
	\begin{align*}
	\frac{\mathcal H^{n-1}(\pi_n(\partial B(x,r)\setminus U))}{r^{n-1}}
	\le \frac{\capa_1(\partial B(x,r)\setminus U)}{r^{n-1}}
	\to 0\quad\textrm{as }r\to 0,
	\end{align*}
	since $U$ is $1$-finely open. Let $r_j\searrow 0$.
	Now we can choose points
 $z_j\in \partial B(x,r_j)\cap U$ such that the angle between $z_j-x$ and $e_n$ goes to zero. Thus
	\begin{align*}
	\limsup_{j\to\infty}\frac{l_{f^*,U}(x,r_j)}{r_j}
	&=\limsup_{j\to\infty}\frac{\inf\{|f^*(x)-f^*(z)|\colon |z-x|\ge |z_j-x|,\, z\in U\}}{r_j}\\
	&\le \limsup_{j\to\infty}\frac{|f^*(x)-f^*(z_j)|}{r_j}\\
	&= 	\limsup_{j\to\infty}\frac{| \nabla f(x) (z_j-x)|}{r_j}
	\quad\textrm{by }\eqref{eq:fine diff assumption 2}\\
	&=0.
	\end{align*}
	Similarly, we can choose points $\widetilde{z}_j\in \partial B(x,r_j)\cap U$
	such that the angle between $\widetilde{z}_j-x$ and $\nabla f_k(x)$ goes to zero,
	where we choose some nonzero $\nabla f_k(x)$. It follows that
	\begin{align*}
	\liminf_{j\to \infty}\frac{L_{f^*,U}(x,r_j)}{r_j}
	&=\liminf_{j\to \infty}\frac{\sup\{|f^*(x)-f^*(z)|\colon |z-x|\le  |\widetilde{z}_j-x|,\, z\in U\}}{r_j}\\
	&\ge \liminf_{j\to\infty}\frac{|f^*(x)-f^*(\widetilde{z}_j)|}{r_j}\\
	&=	\liminf_{j\to\infty}\frac{|\nabla f(x) (\widetilde{z}_j-x)|}{r_j}
	\quad\textrm{by }\eqref{eq:fine diff assumption 2}\\
	&\ge |\nabla f_k(x)|.
	\end{align*}
	Thus we get
	\[
	\limsup_{r\to 0}H_{f^*,U}(x,r)
	\ge \limsup_{j\to\infty}\frac{L_{f^*,U}(x,r_j)}{l_{f^*,U}(x,r_j)}
	=\infty,
	\]
	and so $H_{f^*}^{\textrm{fine}}(x)=\infty$.\\
	
	\textbf{The Cantor part.}
	By Alberti's rank one theorem, see \cite{Al},
	$\tfrac{dDf}{d|Df|} (x)$ is of rank one for $|D^c f|$-a.e. $x\in\R^n$,
	so we need to show for such $x$ that $H_{f^*}^{\textrm{fine}}(x)=\infty$.
	For such points, we have
	\[
	\frac{dDf}{d|Df|} (x)
	=\lim_{r\to 0}\frac{Df(B(x,r))}{|Df|(B(x,r))}=\eta(x)\otimes\xi(x)
	\]
	for unit vectors $\eta(x),\xi(x)\in \R^n$.
	For every such $x$ and $r>0$, we can choose a cube $Q_{\xi(x)}(x,r)$
	such that for different values of $r$ these are dilations of each other.
	Combining Morse's covering Theorem \ref{thm:Morse}
	with standard arguments, see e.g. \cite[Section 1.6]{EvGa}, we get
	\begin{equation}\label{eq:Cantor density}
		\lim_{r\to 0}\frac{|D^c f|(Q_{\xi(x)}(x,r))}{|Df|(Q_{\xi(x)}(x,r))}=1
	\end{equation}
	for $|D^c f|$-a.e. $x\in \R^n$.
	
	We define the auxiliary quantities
	\[
	H_{f^*}^{\textrm{fine},k,k'}(x):=\inf_{U}\sup_{0<r\le 1/k}H_{f^*,U}(x,r),
	\quad
	k\in\N,\ k'\in\N,
	\]
	where the infimum is taken over $1$-finely open sets $U$ containing $x$
	for which
	\[
	\frac{\capa_1(B(x,r)\setminus U)}{r^{n-1}}
	<\frac{1}{k'}
	\quad\textrm{for all }0<r\le 1/k.
	\]
	Note that for all $k'\in \N$, we have
	\[
	H_{f^*}^{\textrm{fine}}=\inf_{k\in\N}H_{f^*}^{\textrm{fine},k,k'}=\lim_{k\to\infty}H_{f^*}^{\textrm{fine},k,k'}.
	\]
	Thus it is enough to show for an arbitrary fixed $M\in\N$ that
	there exists $k'\in \N$ such that for every $k\in \N$, we have
	\begin{equation}\label{eq:reduction}
	H_{f^*}^{\textrm{fine},k,k'}(x)> M 
	\textrm{ for }|D^c f|\textrm{-a.e. }x\in \R^n.
	\end{equation}

	Fix $M\in\N$, a natural number
	\begin{equation}\label{eq:choice of k prime}
	k'\ge \frac{(10n)^{2n}}{\omega_{n-1}},
	\end{equation}
	and also $k\in\N$,
	and then
	fix $x\in \R^n$ where $\tfrac{dDf}{d|Df|} (x)$ has rank one
	and \eqref{eq:Cantor density} holds.
	We can assume that $\xi(x)=e_n$, and then we can assume that
	$Q_{\xi(x)}(x,r)$ is $Q(x,r)$,
	that is, the cube with sides in the direction of the coordinate axes.
	
For $r>0$, define the scalings
\begin{equation}\label{eq:scalings def}
f_{r}(y)=\frac{f(x+ry)-f_{Q(x,r)}}{|Df|(Q(x,r))/r^{n-1}},\quad y\in Q(0,1).
\end{equation}
Excluding another $|D f|$-negligible set,
the following blowup behavior is known, see e.g. \cite[Theorem 3.95]{AFP}.
For a suitable sequence $r_i\to 0$, we get $f_{r_i}\to w$ strictly in $\BV(Q(0,1);\R^n)$, where
\[
w(y)=\eta h(\langle y, e_n\rangle),
\]
where $h$ is an increasing, nonconstant function on $(-1/2,1/2)$.
Note that necessarily $|Dh|((-1/2,1/2))\le 1$ and $|h|\le 1$.
Now
we have that
\[
f_{r_i}\to w \quad\textrm{in }L^1(Q(0,1)) 
\quad \textrm{and}\quad
|Df_{r_i}|(Q(0,1))\to |Dw|(Q(0,1))=1,
\]
and we also have
\begin{equation}\label{eq:strict conv last component}
	|D_n f_{r_i}|(Q(0,1))\to |D_n w|(Q(0,1))=|Dw|(Q(0,1));
\end{equation}
see e.g. \cite[Proposition 3.15]{AFP}.

We can assume that $|Df|(\partial Q(x,r_i))=0$.
Passing to further subsequences if necessary,
we can also assume that $f_{r_i}\to w$ a.e. in $Q(0,1)$, and that
$(f_{r_i})_z\to w_z$ in $L^1((-1/2,1/2))$ for $\mathcal L^{n-1}$-a.e. $z\in Q_{n-1}(0,1)$.
Then by lower semicontinuity, we also have
\begin{equation}\label{eq:lower semicontinuity on slices}
	|Dw_z|((-1/2,1/2))\le \liminf_{i\to\infty}|D(f_{r_i})_z|((-1/2,1/2)).
\end{equation}
On the other hand,
\begin{equation}\label{eq:strict continuity and slices}
	\begin{split}
		&\lim_{j\to\infty}\int_{Q_{n-1}(0,1)}|D(f_{r_i})_z|((-1/2,1/2))\,d\mathcal L^{n-1}(z)\\
		&\qquad =\lim_{j\to\infty}|D_nf_{r_i}|(Q(0,1))\quad\textrm{by }\eqref{eq:slice representation for total variation}\\
		&\qquad =|D_n w|(Q(0,1))\quad\textrm{by }\eqref{eq:strict conv last component}\\
		&\qquad =\int_{Q_{n-1}(0,1)}|Dw_z|((-1/2,1/2))\,d\mathcal L^{n-1}(z)\quad\textrm{by }\eqref{eq:slice representation for total variation}.
	\end{split}
\end{equation}
Combining \eqref{eq:lower semicontinuity on slices}
and \eqref{eq:strict continuity and slices}, we necessarily have
\[
|Dw_z|((-1/2,1/2))= \lim_{i\to\infty}|D(f_{r_i})_z|((-1/2,1/2))
\]
for $\mathcal L^{n-1}$-a.e. $z\in Q_{n-1}(0,1)$, and so
\begin{equation}\label{eq:strict conv on lines}
	(f_{r_i})_z\to w_z
	\quad\textrm{strictly in }\BV((-1/2,1/2))
\end{equation}
for $\mathcal L^{n-1}$-a.e. $z\in Q_{n-1}(0,1)$.

For $U\subset \R^n$ and $y\in\R^n$, denote
\[
r_i^{-1}(U-y):=\{r_i^{-1}(y'-y)\colon y'\in U \}.
\]
Recalling that $k\in\N$ is fixed, we can assume that $r_i<1/(n k)$ for all $i\in\N$.
For all $y\in \R^n$ we can choose a $1$-finely open set
$U_y\ni y$ with
\[
\frac{\capa_1(B(y,r)\setminus U_{y})}{r^{n-1}}
<\frac{1}{k'}
\quad\textrm{for all }0<r\le 1/k
\]
and
\[
\sup_{0<r\le 1/k}H_{f^*,U_{y}}(y,r)\le H_{f^*}^{\textrm{fine},k,k'}(y)+1.
\]
In particular, for $y\in Q(0,1)$ we have
\[
\frac{\capa_1(B(x+r_iy,r)\setminus U_{x+r_iy})}{r^{n-1}}
<\frac{1}{k'}
\quad\textrm{for all }0<r\le 1/k,
\]
and
\begin{equation}\label{eq:almost optimality}
\sup_{0<r\le 1/k}H_{f^*,U_{x+r_iy}}(x+r_iy,r)\le H_{f^*}^{\textrm{fine},k,k'}(x+r_iy)+1,
\end{equation}
so that
\[
\frac{\capa_1(B(y,r)\setminus r_i^{-1}(U_{x+r_iy}-x))}{r^{n-1}}
<\frac{1}{k'}
\quad\textrm{for all }0<r\le \sqrt{n}
\]
(in fact, for all $0<r\le r_i^{-1}/k$),
so applying this with the choice $r=\sqrt{n}$,
using Lemma \ref{lem:capacity and Hausdorff measure} we get
\begin{equation}\label{eq:U comp is small initial}
	\begin{split}
		\mathcal L^{n-1}(\pi_n(Q(0,1)\setminus r_i^{-1}(U_{x+r_iy}-x)))
		&\le \frac{1}{k'}n^n.
	\end{split}
\end{equation}
Using also \eqref{eq:choice of k prime}, we get
\begin{equation}\label{eq:U comp is small}
	\begin{split}
		\mathcal L^{n-1}(\pi_n(Q(0,1)\setminus r_i^{-1}(U_{x+r_iy}-x)))
		&\le \frac{1}{4}\mathcal L^{n-1}(B_{n-1}(0,1/10)).
	\end{split}
\end{equation}

We consider two cases.\\

\textbf{Case 1: $|D^j w|(Q(0,1))=0$}.

From \eqref{eq:slice representation for total variation} we have $|D^j h|((-1/2,1/2))=0$,
and then from \eqref{eq:fundamental theorem of calculus for BV 2}
we have that $h^*$ is continuous, and then so is $w^*$.
For simplicity, let us denote by $h$, $w$ these representatives. 

From \eqref{eq:upper and lower repr sections}, we know that
$((f_{r_i})^{*})_z(t)=((f_{r_i})_z)^{*}(t)$
for $\mathcal L^{n-1}$-a.e. $z\in Q_{n-1}(0,1)$ and for every $t\in (-1/2,1/2)$.
Thus we can simply use the notation $(f_{r_i})^{*}_z(t)$, and similarly
$(f^l_{r_i})^{\wedge}_z(t)$, $(f^l_{r_i})^{\vee}_z(t)$ for the component functions
$f^l_{r_i}$, $l=1,\ldots,n$.

Since $|D^j h|((-1/2,1/2))=0$, $|Dh|$ is not concentrated on one point.
Since $h$ is nonconstant, we can assume that $|D h|((0,1/2))>0$.
Then we find $0<T_1<T_2<T_3<T_4<1/2$ such that
$|Dh|((T_1,T_2))>0$ and
$|Dh|((T_2,T_3))>0$, and $T_4-T_1<1/10$.

Recall that we denote points $y\in \R^n$ by $y=(y_1,\ldots,y_n)$.
Also recall \eqref{eq:fundamental theorem of calculus for BV}.
For some fixed $0<\eps<1/10$ and for all $y,z\in Q(0,1)$, we have
\begin{equation}\label{eq:nonconstant}
	|w(y)-w(z)|=|h(y_n)-h(z_n)|\ge \eps\quad\textrm{whenever }z_n\in (T_1,T_2),\,y_n\in (T_3,T_4).
\end{equation}
For $\mathcal L^{n-1}$-a.e. $z\in Q_{n-1}(0,1)$, by
\eqref{eq:strict conv on lines} and Lemma \ref{lem:strict and uniform conv}
we also have that 
\[
\lim_{i\to\infty}\sup_{t\in [T_1,T_4]}|(f_{r_i})^*_z(t)-w_z(t)|=0,
\]
and so for sufficiently large $i$ depending on $z$, we have
\begin{equation}\label{eq:uniform conv for T and T}
	\sup_{t\in [T_1,T_4]}|(f_{r_i})^*_z(t)-w_z(t)|
	\le \frac{\eps}{6(M+1)}
	< 
	\frac{\eps}{3}.
\end{equation}
Moreover by lower semicontinuity,
\[
\liminf_{i\to\infty}|D(f_{r_i})_z|((T_1,T_2)) \ge |Dw_z|((T_1,T_2))
=|Dh|((T_1,T_2)),
\]
so that
\begin{equation}\label{eq:strict conv slices}
	|D(f_{r_i})_z|((T_1,T_2)) \ge \frac 12 |Dh|((T_1,T_2))
\end{equation}
for sufficiently large $i$ (depending on $z$).
Denote the set of those $z\in Q_{n-1}(0,1)$ for which
\eqref{eq:uniform conv for T and T} and \eqref{eq:strict conv slices} hold by $D_i$.
This set is $\mathcal L^{n-1}$-measurable, since
$|D(f_{r_i})_z|$ is a parametrized measure, and
$(f_{r_i})^*$ and $w$ are Borel functions with
\[
\sup_{t\in [T_1,T_4]}|(f_{r_i})^*_z(t)-w_z(t)|
=\sup_{t\in [T_1,T_4]\cap \Q}|(f_{r_i})^*_z(t)-w_z(t)|
\quad\textrm{for }\mathcal L^{n-1}\textrm{-a.e. }z\in Q_{n-1}(0,1).
\]
Consider $i\in\N$ sufficiently large that
\begin{equation}\label{eq:D comp is small}
\mathcal L^{n-1}(Q_{n-1}(0,1)\setminus D_i)
\le \frac{1}{4}\mathcal L^{n-1}(B_{n-1}(0,1/10)).
\end{equation}
For such $i$, we have
\begin{equation}\label{eq:Dfrj estimate}
\begin{split}
	&|D f_{r_i}|((B_{n-1}(0,1/10)\cap D_i)\times (T_1,T_2))\\
	& \qquad \ge |D_n f_{r_i}|((B_{n-1}(0,1/10)\cap D_i)\times (T_1,T_2))\\
	&\qquad = \int_{B_{n-1}(0,1/10)\cap D_i}|D(f_{r_i})_z|((T_1,T_2))\,d\mathcal L^{n-1}(z)
	\quad\textrm{by }\eqref{eq:slice representation for total variation} \\
	&\qquad\ge \frac {\mathcal L^{n-1}(B_{n-1}(0,1/10)\cap D_i)}{2}  |Dh |((T_1,T_2))
	\quad\textrm{by }\eqref{eq:strict conv slices}\\
	&\qquad\ge  \frac {\mathcal L^{n-1}(B_{n-1}(0,1/10))}{4}  |Dh |((T_1,T_2))
	\quad\textrm{by }\eqref{eq:D comp is small}.
\end{split}
\end{equation}
Let $y\in  (B_{n-1}(0,1/10)\cap D_i) \times (T_1,T_2)$.
By \eqref{eq:D comp is small} and \eqref{eq:U comp is small}
we find a point
\[
q_y\in [(B_{n-1}(0,1/10)\cap D_i)\times (T_3,T_4)]\cap r_i^{-1}(U_{x+r_iy}-x).
\]
Since $T_4-T_1<1/10$, it follows that $|(q_y)_n-y_n|<1/10$, and
then $|q_y-y|<3/10$.
Moreover, we have
\begin{equation}\label{eq:y and q y}
\begin{split}
	|f_{r_i}^*(q_y)-f_{r_i}^*(y)| 
	&\ge \Big[-|f_{r_i}^*(q_y)-w(q_y)|
	+|w(q_y)-w(y)|
	-|w(y)-f_{r_i}^*(y)|\Big]\\
	&> -\eps/3+\eps-\eps/3\quad\textrm{by }
	\eqref{eq:nonconstant},\eqref{eq:uniform conv for T and T}\\
	&=\eps/3.
\end{split}
\end{equation}
On the other hand,
there is a point $p\in Q_{n-1}(0,1)$ such that
$B_{n-1}(p,1/10)\subset Q_{n-1}(0,1)\setminus Q_{n-1}(0,4/5)$.
Using \eqref{eq:D comp is small} and \eqref{eq:U comp is small},
we also find a point
\[
q_y'\in \big((B_{n-1}(p,1/10)\cap D_i)\times \{y_n\}\big)\cap r_i^{-1}(U_{x+r_iy}-x),
\]
and then by \eqref{eq:uniform conv for T and T},
\begin{equation}\label{eq:widehat y and y}
	\begin{split}
	|f^*_{r_i}(q_y')-f^*_{r_i}(y)|
	&\le |f^*_{r_i}(q_y')-w(q_y')|+|w(q_y')-w(y)|+|w(y)-f^*_{r_i}(y)|\\
	&\le \frac{\eps}{6(M+1)}+0+\frac{\eps}{6(M+1)}\\
	&=\frac{\eps}{3(M+1)}.
	\end{split}
\end{equation}
Note that $|q'_y-y|\ge 3/10$.
Let 
\[
A_i:=(B_{n-1}(0,1/10)\cap D_i)\times (T_1,T_2).
\]
Recall the definition of the scalings \eqref{eq:scalings def}.
For the original function $f$, for all sufficiently large $i$ we get
\begin{align*}
\frac{|Df|(x+r_i A_i)}{|Df|(Q(x,r_i))}
&= |Df_{r_i}|(A_i)\\ 
&\ge \frac {\mathcal L^{n-1}(B_{n-1}(0,1/10))}{4}  |Dh |((T_1,T_2))
\quad\textrm{by }\eqref{eq:Dfrj estimate}.
\end{align*}
Then by \eqref{eq:Cantor density}, for sufficiently large $i$ we also have
\[
\frac{|D^cf|(x+r_i A_i)}{|D^cf|(Q(x,r_i))}
\ge \frac {\mathcal L^{n-1}(B_{n-1}(0,1/10))}{5}  |Dh |((T_1,T_2)).
\]
Recall that $|q_y-y|\le 3/10$ and $|q_y'-y|\ge 3/10$.
Combining \eqref{eq:y and q y} and \eqref{eq:widehat y and y}, we get
\[
H_{f^*_{r_i},r_i^{-1}(U_{x+r_iy}-x)}(y,3/10)
\ge \frac{|f^*_{r_i}(q_y)-f^*_{r_i}(y)|}{|f^*_{r_i}(q_y')-f^*_{r_i}(y)|}
> \frac{\eps/3}{\eps/(3(M+1))}=M+1.
\]
Now
\[
H_{f^*,U_{x+r_i y}}(x+r_{i}y,3r_{i}/10)
=H_{f^*_{r_{i}},r_{i}^{-1}(U_{x+r_iy}-x)}(y,3/10)
>M+1.
\]
Then by \eqref{eq:almost optimality} we have
\begin{align*}
H_{f^*}^{\textrm{fine},k,k'}(x+r_iy)+1
&\ge \sup_{0<r\le 1/k}H_{f^*,U_{x+r_iy}}(x+r_iy,r)\\
&\ge H_{f^*,U_{x+r_i y}}(x+r_{i}y,3r_{i}/10)\\
&>M+1.
\end{align*}
Apply Lemma \ref{lem:nonmeasurable} with the choices $\mu=|D^c f|$
and $D$ is the set where
$H_f^{\textrm{fine},k,k'}\le M$.
By the lemma, this set has $|D^c f|$-measure zero.
Thus $H_f^{\textrm{fine},k,k'}(x)>M$ for $|D^c f|$-a.e. $x\in \R^n$,
which is what we needed to show (recall \eqref{eq:reduction}).\\

\textbf{Case 2: $|D^j w|(Q(0,1))>0$}.

Using \eqref{eq:slice representation for total variation}, we find that also
$|D^jh|((-1/2,1/2))>0$.
Consider $s\in (-1,1)\cap J_{h}$ such that $h$ has its biggest jump at $s$. In particular,
$h^{\vee}(s)-h^{\wedge}(s)=|Dh|(\{s\})>0$.

We can assume that $s\ge 0$.
From \eqref{eq:Cantor density}, we have
\begin{equation}\label{eq:jumps go to zero}
	\lim_{i\to\infty}|D^j f_{r_i}|(Q(0,1))=0.
\end{equation}
Recall the representation $w(y)=\eta h(y_n)$.
Pick $l\in \{1,\ldots,n\}$ with the largest value of $|\eta_l|>0$; we can assume that $l=1$ and that in fact
$\eta_1>0$.
Recall that $M\in\N$ is a (large) fixed number.
Choose $0<\delta<1/100$ sufficiently small that
\begin{equation}\label{eq:choice of delta}
\delta<\eta_1 \frac{h^{\vee}(s)-h^{\wedge}(s)}{4(M+1)}.
\end{equation}
Note that $|D^c f|$-almost all of the set of
points of the type considered in the current Case 2 can be represented as a countable
union of points where the jump size $h^{\vee}(s)-h^{\wedge}(s)$ is at least, say, $1/l'$ with $l'\in\N$.
Thus we can assume that $h^{\vee}(s)-h^{\wedge}(s)$ is at least a strictly positive number, and so
$\delta$ in fact only depends on $M$.
Then choose $0<\eps<1/10$ sufficiently small that $s+\eps<1$ and
\begin{equation}\label{eq:choice of eps}
	40^n \omega_{n-1}\omega_n^{-1}\delta^{-n}\eps^{n-1}
	\le \frac{1}{4}\mathcal L^{n-1}(B_{n-1}(0,1/10)).
\end{equation}
Apart from \eqref{eq:choice of k prime}, assume
\begin{equation}\label{eq:choice of k prime two}
k'\ge \frac{2^{4n^2}\eps^{-n}}{\omega_{n-1}},
\end{equation}
so that $k'$ only depends on $M$ (apart from the dimension $n$).
Consider the set $D_i$ of those $z \in Q_{n-1}(0,1)$ for which
\begin{equation}\label{eq:only small jumps}
	(f_{r_i})_z\in\BV((-1/2,1/2))\textrm{ has jumps at most size }
	\ \delta\min\{1, \eta_1 (h^{\vee}(s)-h^{\wedge}(s))\}.
\end{equation}
By Lemma \ref{lem:biggest jump}, $D_i$ is $\mathcal L^{n-1}$-measurable.
Due to \eqref{eq:jumps go to zero} and \eqref{eq:slice representation for total variation}, we have
\begin{equation}\label{eq:size of Dj sets}
	\lim_{i\to\infty}\mathcal L^{n-1}(D_i)=\mathcal L^{n-1}(Q_{n-1}(0,1)).
\end{equation}
By Theorem \ref{thm:quasisemicontinuity} and Lemma \ref{lem:capacity and Hausdorff measure},
we can also assume that $(f^1_{r_i})^{\wedge}$ and $(f^1_{r_i})^{\vee}$ restricted to $D_i\times (-1/2,1/2)$ 
are lower and upper semicontinuous, respectively.
Moreover, by \eqref{eq:strict conv on lines}
we can assume  that $(f_{r_i})_z\to w_z$ strictly in $\BV((-1/2,1/2))$ for all $z\in D_i$, 
and $(f_{r_i})^*_z(t_0)\to w^*_z(t_0)$ for some $t_0\in [s-\eps,s+\eps]$.
Thus by \eqref{eq:fundamental theorem of calculus for BV 2}, we get
\begin{align*}
	\limsup_{i\to\infty}\sup_{t\in [s-\eps,s+\eps]}|(f_{r_i})^{*}_z(t)-w^*_z(t_0)|
	&\le \limsup_{i\to\infty}\sup_{t,t'\in [s-\eps,s+\eps]}|(f_{r_i})^{*}_z(t)-(f_{r_i})^{*}_z(t')|\\
	&\le \limsup_{i\to\infty}|D(f_{r_i})_z|([s-\eps,s+\eps])\\
	&\le |Dw_z|([s-\eps,s+\eps])\\
	&\le 1.
\end{align*}
Note that $w_z(\cdot)=\eta h(\cdot)$, where $\eta$ is a unit vector and $|h|\le 1$.
Thus for large enough $i$ (depending on $z$) we have
\begin{equation}\label{eq:f containment}
		(f_{r_i})^{*}_z([s-\eps,s+\eps])
		\subset Q(0,5).
\end{equation}
By Lemma \ref{lem:strict and uniform conv 2} we also have for large enough $i$
\begin{equation}\label{eq:jump condition 1}
(f_{r_i}^1)^{\vee}_z(t)\ge \eta_1 h^{\vee}(s)-\delta \eta_1 (h^{\vee}(s)-h^{\wedge}(s))
 \quad\textrm{for all }t\in [s+\eps/2,s+\eps]
\end{equation}
and
\begin{equation}\label{eq:jump condition 2}
(f_{r_i}^1)^{\wedge}_z(t)\le \eta_1h^{\wedge}(s)+\delta \eta_1  (h^{\vee}(s)-h^{\wedge}(s))
 \quad\textrm{for all }t\in [s-\eps,s-\eps/2].
\end{equation}
Denote by $E_i\subset D_i$ the set of points $z$ for which
\eqref{eq:f containment},
\eqref{eq:jump condition 1}, and \eqref{eq:jump condition 2} hold.
By Borel measurability, lower semicontinuity, and upper semicontinuity, respectively, the set $E_i$ is 
$\mathcal L^{n-1}$-measurable.
Using \eqref{eq:only small jumps}, note that
\begin{equation}\label{eq:using small jumps}
(f_{r_i}^1)^{\vee}_z(s-\eps/2)
\le \eta_1h^{\wedge}(s)+2\delta \eta_1(h^{\vee}(s)-h^{\wedge}(s)).
\end{equation}
By upper semicontinuity of $t\mapsto (f^1_{r_i})^{\vee}_z(t)$
for $z\in E_i$, we find a minimal
number $t_{i,z}\in [s-\eps/2,s+\eps/2]$ for which
\[
(f^1_{r_i})^{\vee}_z(t_{i,z})\ge \eta_1\frac{ h^{\wedge}(s)+2 h^{\vee}(s)}{3}.
\]
Then by \eqref{eq:two sided continuity} and \eqref{eq:only small jumps}, in fact
\[
\eta_1\frac{h^{\wedge}(s)+2 h^{\vee}(s)}{3}+\delta\eta_1 (h^{\vee}(s)-h^{\wedge}(s))
\ge (f^1_{r_i})^{\vee}_z(t_{i,z})\ge \eta_1\frac{h^{\wedge}(s)+2 h^{\vee}(s)}{3},
\]
and so by \eqref{eq:using small jumps}, necessarily $t_{i,z}>s-\eps/2$.
Then we also find a maximal number
$t'_{i,z}\in (s-\eps/2,t_{i,z})$ for which
\[
(f^1_{r_i})^{\wedge}_z(t_{i,z}')\le \eta_1\frac{2  h^{\wedge}(s)+ h^{\vee}(s)}{3}.
\]
Then in total
\begin{equation}\label{eq:frj bounds at end points}
	(f^1_{r_i})^{\vee}_z(t_{i,z})\ge \eta_1\frac{  h^{\wedge}(s)+2 h^{\vee}(s)}{3}
	\quad\textrm{and}\quad
	(f^1_{r_i})^{\wedge}_z(t_{i,z}')\le \eta_1\frac{2 h^{\wedge}(s)+ h^{\vee}(s)}{3},
\end{equation}
and also
\begin{equation}\label{eq:frj bounds}
\begin{split}
\eta_1\frac{2  h^{\wedge}(s)+ h^{\vee}(s)}{3}-\delta\eta_1 (h^{\vee}(s)-h^{\wedge}(s))
&\le (f^1_{r_i})^{\wedge}_z(t)\\
&\le (f^1_{r_i})^{\vee}_z(t)\\
&\le \eta_1\frac{h^{\wedge}(s)+2h^{\vee}(s)}{3}+\delta\eta_1 (h^{\vee}(s)-h^{\wedge}(s))
\end{split}
\end{equation}
for all $t'_{i,z}\le t\le t_{i,z}$.
By \eqref{eq:fundamental theorem of calculus for BV 2} and \eqref{eq:frj bounds at end points}, we have
\begin{equation}\label{eq:T and tjz}
	\begin{split}
		|D(f_{r_i})_z|([t'_{i,z},t_{i,z}])
		&\ge |(f^1_{r_i})^{\wedge}_z(t'_{i,z})-(f^1_{r_i})^{\vee}_z(t_{i,z})|\\
		&\ge \eta_1 \frac{h^{\vee}(s)-h^{\wedge}(s)}{3}.
	\end{split}
\end{equation}
Due to \eqref{eq:size of Dj sets}, we have
\[
	\lim_{i\to\infty}\mathcal L^{n-1}(E_i)=\mathcal L^{n-1}(Q_{n-1}(0,1)).
\]
Consider the set $F_i\subset E_i$ consisting of points $z$
for which we have $|f^*_{r_i}(z,u_{i,z})-f^*_{r_i}(\widehat{z},\widehat{u})|\ge \delta$ for some
$u_{i,z}\in [t_{i,z}',t_{i,z}]$ and all
\[
(\widehat{z},\widehat{u})\in Q(0,1)\cap r_i^{-1}(U_{x+r_i(z,u_{i,z})}-x)
\quad\textrm{with } \widehat{z}\notin B_{n-1}(z,2\eps).
\]
Recursively, for a fixed $i$ and starting from $l=1$, take points $(z_l,u_l)\in F_i\times [t_{i,z_l}',t_{i,z_l}]$ 
with $u_l=u_{i,z_l}$ such that $z_l\in F_i$ is not in any of the sets
\[
B_{n-1}(z_{\widehat{l}},2\eps)\cup \pi_n[Q_{n-1}(0,1)\setminus r_i^{-1}(U_{x+r_i(z_{\widehat{l}},u_{\widehat{l}})}-x)].
\]
for $\widehat{l}< l$.
Call this collection of points $I$.
By \eqref{eq:U comp is small initial} and \eqref{eq:choice of k prime two}, we have
\[
\#I \ge \frac{\mathcal L^{n-1}(F_i)}{\omega_{n-1}(2\eps)^{n-1}+n^n/k'}
\ge \frac{\mathcal L^{n-1}(F_i)}{2^{n}\omega_{n-1}\eps^{n-1}}.
\]
On the other hand, we have
$|f^*_{r_i}(z_l,u_l)-f^*_{r_i}(z_{\widehat{l}},u_{\widehat{l}})|\ge \delta$
for distinct points $(z_l,u_l)$ and $(z_{\widehat{l}},u_{\widehat{l}})$ in $I$,
and so by \eqref{eq:f containment}, we have
$\# I \le 20^n/(\delta^n\omega_n)$.
By the choice of $\eps$, recall \eqref{eq:choice of eps}, we get
\[
		\mathcal L^{n-1}(F_i)
	\le \frac{1}{4}\mathcal L^{n-1}(B_{n-1}(0,1/10)).
\]
Denote $S_i:=E_i\setminus F_i$.

Thus for large enough $i$, we have
\begin{equation}\label{eq:size of Pj sets}
		\begin{split}
	\mathcal L^{n-1}(B_{n-1}(0,1)\setminus S_i)
	\le \frac{1}{2}\mathcal L^{n-1}(B_{n-1}(0,1/10)).
	\end{split}
\end{equation}
Define
\[
A_i:=
\bigcup_{z\in S_i\cap B_{n-1}(0,1/10)}\{z\}\times [t_{i,z}',t_{i,z}].
\]
By Lemma \ref{lem:last measurability} (proved below) we know that  $S_i\ni z\mapsto t_{i,z}$ and 
$S_i\ni z\mapsto t'_{i,z}$ are Borel functions, and 
so the set $A_i$ is $|D_n f_{r_i}|$-measurable
by Lemma \ref{lem:measurability}.

We also have  for some $s-\eps<\kappa<s-\eps/2$ that 
\[
	(f_{r_i})_z^*(\kappa)\to \eta h(\kappa)
	\quad\textrm{for}\quad
	\mathcal L^{n-1}\textrm{-a.e. }z\in Q_{n-1}(0,1).
\]
Consider the set $H_i$ of those $z\in Q_{n-1}(0,1)$ for which
\[
	|(f_{r_i})_z^*(\kappa)-\eta  h(\kappa)|<\delta \eta_1 (h^{\vee}(s)-h^{\wedge}(s)).
\]
Using \eqref{eq:jump condition 2}, we get
\begin{equation}\label{eq:one third estimate}
	(f_{r_i}^1)_z^*(\kappa) < \eta_1 h^{\wedge}(s)+2\delta \eta_1 (h^{\vee}(s)-h^{\wedge}(s)).
\end{equation}
For sufficiently large $i$, we have
\begin{equation}\label{eq:Hi size}
\mathcal L^{n-1}(Q_{n-1}(0,1)\setminus H_i)<\frac{1}{4}\mathcal L^{n-1}(B(0,\eps/2)).
\end{equation}
For all sufficiently large $i$, we have
\begin{equation}\label{eq:measure of Ai}
\begin{split}
	|D f_{r_i}|(A_i)
	&\ge |D_n f^1_{r_i}|(A_i)\\
	&= \int_{S_i\cap B_{n-1}(0,1/10)}|D (f^1_{r_i})_z|([t_{i,z}',t_{i,z}])\,d\mathcal L^{n-1}(z) 
	\quad\textrm{by }\eqref{eq:slice representation for total variation} \\
	&\ge \mathcal L^{n-1}(S_i\cap B_{n-1}(0,1/10))
	\eta_1 \frac{h^{\vee}(s)-h^{\wedge}(s)}{3}
	\quad\textrm{by }\eqref{eq:T and tjz}\\
	&\ge \mathcal L^{n-1}(B_{n-1}(0,1/10)) \eta_1 \frac{h^{\vee}(s)-h^{\wedge}(s)}{6}
	\quad\textrm{by }\eqref{eq:size of Pj sets}.
\end{split}
\end{equation}
Recall the definition of the scalings \eqref{eq:scalings def}.
For the original function $f$, for all sufficiently large $i$ we get
\begin{align*}
	\frac{|Df|(x+r_i A_i)}{|Df|(Q(x,r_i))}
	&=|D f_{r_i}|(A_i)\\
	&\ge  \mathcal L^{n-1}(B_{n-1}(0,1/10)) \eta_1 \frac{h^{\vee}(s)-h^{\wedge}(s)}{6}
	\quad\textrm{by }\eqref{eq:measure of Ai}.
\end{align*}
Then by \eqref{eq:Cantor density}, for sufficiently large $i$ we also have
\[
\frac{|D^cf|(x+r_i A_i)}{|D^cf|(Q(x,r_i))}
\ge \mathcal L^{n-1}(B_{n-1}(0,1/10)) \eta_1 \frac{h^{\vee}(s)-h^{\wedge}(s)}{7}.
\]

Let $y\in A_i$.
From \eqref{eq:U comp is small}, the set of points
\[
q_y'\in r_i^{-1}(U_{x+r_iy}-x)
\quad\textrm{with}\quad |\pi_n(q_y')-\pi_n(y)|\ge 2\eps
\]
is nonempty.
By the definition of $F_i$, in fact we can find such a $q_y'$ with
\[
	|f^*_{r_i}(q_y')-f^*_{r_i}(y)|<\delta,
\]
and note that $|q_y'-y|\ge 2\eps$.
Combining \eqref{eq:U comp is small initial} and \eqref{eq:choice of k prime two},
we have
\[
		\mathcal L^{n-1}(\pi_n(Q(0,1)\setminus r_i^{-1}(U_{x+r_iy}-x)))
		\le \frac{1}{4}\mathcal L^{n-1}(B(0,\eps/2)),
\]
and then combining with \eqref{eq:Hi size},
we also find a point
\[
q_y:=(z,\kappa)\in r_i^{-1}(U_{x+r_iy}-x)\cap H_i
\]
with $|z-\pi_n(y)|<\eps/2$.
Then $|q_y-y|< 2\eps$.
Using
\eqref{eq:frj bounds} and
\eqref{eq:one third estimate},
we have
\[
|f_{r_i}^*(q_y)-f_{r_i}^*(y)|\ge \eta_1\frac{h^{\vee}(s)-h^{\wedge}(s)}{4}.
\]
Thus
\begin{align*}
H_{f^*_{r_i},r_i^{-1}(U_{x+r_iy}-x)}(y,2\eps)
&\ge \frac{|f^*_{r_i}(q_y)-f^*_{r_i}(y)|}{|f^*_{r_i}(q_y')-f^*_{r_i}(y)|}\\
&> \frac{\eta_1 (h^{\vee}(s)-h^{\wedge}(s))/4}{\delta}\\
&>M+1\quad \textrm{by }\eqref{eq:choice of delta}.
\end{align*}
Now
\[
H_{f^*,U_{x+r_i y}}(x+r_{i}y,2r_i\eps)
\ge H_{f^*_{r_i},r_i^{-1}(U_{x+r_iy}-x)}(y,2\eps)
>M+1.
\]
It follows that
\begin{align*}
H_f^{\textrm{fine},k,k'}(x+r_iy)+1	
&\ge \sup_{0<r\le 1/k}H_{f^*,U_{x+r_i y}}(x+r_{i}y,r)\quad\textrm{by }\eqref{eq:almost optimality}\\
&\ge H_{f^*,U_{x+r_i y}}(x+r_{i}y,2\eps r_{i})\\
&>M+1.
\end{align*}
Apply Lemma \ref{lem:nonmeasurable} with the choices $\mu=|D^c f|$
and $D$ is the set where $Df$ has rank one and
$H_f^{\textrm{fine},k,k'}\le M$.
By the lemma, this set has $|D^c f|$-measure zero.
Thus $H_f^{\textrm{fine},k,k'}(x)>M$ for $|D^c f|$-a.e. $x\in \R^n$,
which is what we needed to show (recall \eqref{eq:reduction}).\\

\textbf{The jump part.}
By Alberti's rank one theorem, $\tfrac{dDf}{d|Df|} (x)$ is of rank one for $|D^j f|$-a.e. $x\in\R^n$,
and so we need to show that $H_{f^*}^{\textrm{fine}}(x)=\infty$.
Recall definitions from Section \ref{sec:BV functions}.
Consider the sets
\[
J_f\cap \{f^{-}\in B(q_j,r_k)\}\cap \{f^{+}\in B(q_l,r_m)\}.
\]
for $q_j,q_l\in \Q^n$ and $r_k,r_m\in \Q_+$, $|q_j-q_l|>r_k+r_m$.
Each of these sets has finite $\mathcal H^{n-1}$-measure due to \eqref{eq:jump part representation},
and is countably $\mathcal H^{n-1}$-rectifiable (see \cite[Theorem 3.78]{AFP}).
Thus for each of these sets,
from Lemma \ref{lem:rectifiable and cap} we obtain an exceptional set of $\mathcal H^{n-1}$-measure zero.
Since there are countably many sets,
corresponding to different choices of
$q_j,q_l,r_k,r_m$,
we get countably many exceptional sets of $\mathcal H^{n-1}$-measure zero.

Fix $x\in J_f$ which is outside all of the exceptional sets;
this is true of $\mathcal H^{n-1}$-a.e. $x\in J_f$ and thus of $|D^j f|$-a.e. $x\in\R^n$.
Fix $0<\eps<|f^{-}(x)-f^{+}(x)|/2$.
Choose $q_j,q_l,r_k,r_m$ such that $r_k+r_m<\eps$ and $r_k+r_m<|q_j-q_l|$, and
\[
f^{-}(x)\in B(q_j,r_k)\quad\textrm{and}\quad f^{+}(x)\in B(q_l,r_m).
\]
Now $x$ is contained in
\[
S:=J_f\cap \{f^{-}\in B(q_j,r_k)\}\cap \{f^{+}\in B(q_l,r_m)\}.
\]
Let $U$ be an arbitrary $1$-finely open set containing $x$.
From Lemma \ref{lem:rectifiable and cap}, we have
\[
\lim_{r\to 0}\frac{\mathcal H^{n-1}(B(x,r)\cap S\cap U)}{\omega_{n-1} r^{n-1}}=1.
\]
In particular, we find a sequence of points $y_i\in S\cap U$
with $y_i\to x$, $y_i\neq x$.
By the definition of $S$ and by \eqref{eq:jump point formula}, we have
\begin{align*}
	|f^*(y_i)-f^*(x)|
	&=\left|\frac{f^{-}(y_i)+f^{+}(y_i)}{2}-\frac{f^{-}(x)+f^{+}(x)}{2}\right|\\
	&\le \frac{1}{2}|f^{-}(y_i)-f^{-}(x)|+\frac{1}{2}|f^{+}(y_i)-f^{+}(x)|\\
	&\le r_k+r_m\le \eps.
\end{align*}
By Lemma \ref{lem:capa in small ball} we know that
\[
\lim_{r\to 0}\frac{\mathcal L^n(B(x,r)\setminus U)}{\mathcal L^n(B(x,r))} = 0.
\]
Now from the asymptotic behavior given in \eqref{eq:jump value 1} and
\eqref{eq:jump value 2}, for any $r>0$ we find a sequence of 
points $\widehat{y}_i\in B_{\nu}^+(x,r)\cap U$ converging to $x$ with
$|f^*(\widehat{y}_i)-f^+(x)|<\eps$, and then
\begin{align*}
	|f^*(\widehat{y}_i)-f^*(x)|
	&=|f^*(\widehat{y}_i)-(f^{-}(x)+f^+(x))/2|\\
	&\ge \frac{1}{2}|f^{+}(x)-f^{-}(x)|-|f^*(\widehat{y}_i)-f^{+}(x)|\\
	&\ge \frac{1}{2}|f^{+}(x)-f^{-}(x)|-\eps.
\end{align*}
Hence for all sufficiently small $r>0$,
\[
H_{f^*,U}(x,r)=\frac{L_{f^*,U}(x,r)}{l_{f^*,U}(x,r)}\ge \frac{\frac{1}{2}|f^{+}(x)-f^{-}(x)|-\eps}{\eps}.
\]
We get
\[
\limsup_{r\to 0}H_{f^*,U}(x,r)
\ge \frac{\frac{1}{2}|f^{+}(x)-f^{-}(x)|-\eps}{\eps}.
\]
Since $\eps>0$ was arbitrary and so was the $1$-finely open set $U$, we get $H_{f^*}^{\textrm{fine}}(x)=\infty$.
\end{proof}

\begin{lemma}\label{lem:last measurability}
	The functions $S_i\ni z\mapsto t_{i,z}$ and 
	$S_i\ni z\mapsto t'_{i,z}$ in Case 2 of the proof of the Cantor part of Theorem
	\ref{thm:rank} are Borel functions.
\end{lemma}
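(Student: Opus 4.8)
The plan is to read off the defining extremal properties of $t_{i,z}$ and $t'_{i,z}$ and couple them with the semicontinuity arranged in Case 2. Write $c_1:=\eta_1\tfrac{h^{\wedge}(s)+2h^{\vee}(s)}{3}$ and $c_2:=\eta_1\tfrac{2h^{\wedge}(s)+h^{\vee}(s)}{3}$. By construction, for $z\in S_i\subset E_i$ the number $t_{i,z}$ is the smallest $t\in[s-\eps/2,s+\eps/2]$ with $(f^1_{r_i})^{\vee}_z(t)\ge c_1$, and $t'_{i,z}$ is the largest $t\in(s-\eps/2,t_{i,z})$ with $(f^1_{r_i})^{\wedge}_z(t)\le c_2$. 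As elsewhere in the proof, I would first note that we may assume $D_i$, and hence $S_i$, to be Borel (replace $D_i$ by a Borel subset of full $\mathcal L^{n-1}$-measure, which affects none of the measure estimates of Case 2 since these only tolerate $\mathcal L^{n-1}$-null, equivalently $|D_nf_{r_i}|$-null, modifications); then on $S_i\times(-1/2,1/2)$ the map $(z,t)\mapsto(f^1_{r_i})^{\vee}_z(t)$ is upper semicontinuous and $(z,t)\mapsto(f^1_{r_i})^{\wedge}_z(t)$ is lower semicontinuous.

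The point I would build everything on is the elementary fact that if $g$ is upper semicontinuous on $S_i\times(-1/2,1/2)$ and $K\subset(-1/2,1/2)$ is compact, then $z\mapsto\max_{t\in K}g(z,t)$ is upper semicontinuous on $S_i$ (the maximum is attained by compactness and upper semicontinuity; if $z_j\to z$ in $S_i$ and $t_j\in K$ realizes the maximum at $z_j$, then after passing to a subsequence $t_j\to t\in K$ and $\max_{t'\in K}g(z,t')\ge g(z,t)\ge\limsup_j g(z_j,t_j)$), with the dual statement for lower semicontinuity and $\min$. Applying this with $g=(f^1_{r_i})^{\vee}$ and $K=[s-\eps/2,a]$, the function $z\mapsto M_a(z):=\max_{t\in[s-\eps/2,a]}(f^1_{r_i})^{\vee}_z(t)$ is upper semicontinuous, hence Borel, on $S_i$. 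For $z\in S_i$ and $a\in[s-\eps/2,s+\eps/2]$ one has $t_{i,z}\le a$ precisely when some $t\in[s-\eps/2,a]$ satisfies $(f^1_{r_i})^{\vee}_z(t)\ge c_1$, i.e. precisely when $M_a(z)\ge c_1$; since $\{z\in S_i:t_{i,z}\le a\}$ is then Borel for every $a\in\R$ (empty for $a<s-\eps/2$, all of $S_i$ for $a\ge s+\eps/2$), the function $z\mapsto t_{i,z}$ is Borel on $S_i$.

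For $t'_{i,z}$ the only new feature is that the admissible interval $(s-\eps/2,t_{i,z})$ itself varies with $z$, which I would remove by slicing along rationals. For $a\in(s-\eps/2,s+\eps/2)$ and $q\in\Q$ with $q>a$, the function $z\mapsto m_{a,q}(z):=\min_{t\in[a,q]}(f^1_{r_i})^{\wedge}_z(t)$ is lower semicontinuous on $S_i$ by the dual observation, hence Borel, so $\{z\in S_i:m_{a,q}(z)\le c_2\}$ is Borel; since $t\mapsto(f^1_{r_i})^{\wedge}_z(t)$ is lower semicontinuous, a short check gives, for $z\in S_i$,
\[
\{z\in S_i:t'_{i,z}\ge a\}=\bigcup_{q\in\Q,\;q>a}\Big(\{z\in S_i:t_{i,z}>q\}\cap\{z\in S_i:m_{a,q}(z)\le c_2\}\Big),
\]
a countable union of Borel sets (using that $z\mapsto t_{i,z}$ is already Borel); together with the obvious description for $a\le s-\eps/2$ and $a\ge s+\eps/2$ this shows $z\mapsto t'_{i,z}$ is Borel on $S_i$. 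Finally, extending $t_{i,\cdot}$ by the constant $s+\eps/2$ and $t'_{i,\cdot}$ by the constant $s-\eps/2$ off $S_i$ gives Borel functions on $\R^{n-1}$ with $t'_{i,\cdot}\le t_{i,\cdot}$, which is the form required when Lemma~\ref{lem:measurability} is applied to $A_i$. The only genuine subtlety is justifying the "extremum attained" claims and the $z$-dependence of the admissible interval for $t'_{i,z}$, both handled by the joint semicontinuity and the rational decomposition above; the reduction to Borel sets is a routine measure-theoretic adjustment.
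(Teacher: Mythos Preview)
Your proof is correct and rests on the same idea as the paper's: use the joint upper/lower semicontinuity of $(f^1_{r_i})^{\vee}$ and $(f^1_{r_i})^{\wedge}$ on $S_i\times(-1/2,1/2)$ to read off the measurability of the extremal times. For $t_{i,z}$ your argument via $M_a(z)=\max_{t\in[s-\eps/2,a]}(f^1_{r_i})^{\vee}_z(t)$ in fact shows that $\{t_{i,z}\le a\}$ is relatively closed in $S_i$, i.e.\ $z\mapsto t_{i,z}$ is lower semicontinuous, which is exactly what the paper asserts. The only genuine difference is in handling the $z$-dependent interval for $t'_{i,z}$: the paper exploits lower semicontinuity of $t_{i,z}$ to write the open subgraph $\{(z,t):s-\eps/2<t<t_{i,z}\}$ as a countable union of closed cylinders $\overline{B}(x_k,r_k)\times[a_k,b_k]$, obtains an upper semicontinuous $t'_{i,z,k}$ on each $\overline{B}(x_k,r_k)\cap S_i$, and takes the supremum over $k$; you instead slice the level sets $\{t'_{i,z}\ge a\}$ directly along rationals $q$, reducing to the Borel sets $\{t_{i,z}>q\}$ and the closed sets $\{m_{a,q}\le c_2\}$. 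Both decompositions are routine and equivalent in strength; yours is perhaps marginally more elementary since it avoids the cylinder representation, while the paper's yields the slightly sharper conclusion that $t'_{i,\cdot}$ is a countable supremum of upper semicontinuous functions. Your remark that $S_i$ may be taken Borel (by passing to a Borel subset of $D_i$ of full $\mathcal L^{n-1}$-measure) is a sensible technical normalisation implicit in the paper's use of Theorem~\ref{thm:quasisemicontinuity}, and your final extension by constants is exactly what is needed for the application of Lemma~\ref{lem:measurability}.
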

\begin{proof}
	We have that $(f^1_{r_i})^{\vee}$ restricted to $S_i\times (-1/2,1/2)$ 
	is upper semicontinuous, which implies that $S_i\ni z\mapsto t_{i,z}$ is lower semicontinuous.
	The subgraph
	\[
	\{(z,t)\colon z\in S_i,\,s-\eps/2<t<t_{i,z}\}
	\] 
	is then relatively open in $S_i\times (s-\eps/2,s+\eps/2)$
	and can be represented as a countable union of closed
	cylinders
	\[
	C_k:=\overline{B}(x_k,r_k)\times [a_k,b_k],\quad k\in\N,
	\]
	intersected with $S_i\times (s-\eps/2,s+\eps/2)$.
	Given $z\in \overline{B}(x_k,r_k)\cap S_i$, let $t_{i,z,k}'$ be the maximal number
	in $(s-\eps/2,b_k]$ for which
	\[
	(f^1_{r_i})^{\wedge}_z(t_{i,z,k}')\le \eta_1\frac{2  h^{\wedge}(s)+ h^{\vee}(s)}{3}.
	\]
	Since $(f^1_{r_i})^{\wedge}$ restricted to $S_i$ 
	is lower semicontinuous, the mapping 
	$ \overline{B}(x_k,r_k)\cap S_i\ni z\mapsto t_{i,z,k}'$ is upper semicontinuous.
	Then
	\[
	S_i\ni z\mapsto t_{i,z}'=\sup_{k\in\N} t_{i,z,k}'
	\]
	is Borel.
\end{proof}

\end{document}